\documentclass[11pt]{amsart}
\usepackage[utf8]{inputenc}
\usepackage{todonotes}
\usepackage[margin=1in]{geometry}
\usepackage{enumitem}

\title{5-cycles in the Complement of Minimal Prime Graphs}
\author{Micah Dorton, Thomas Michael Keller, Ryan Tang, Justin Yu}

\usepackage{amsmath,amssymb,amsthm,graphicx,mathtools,comment,color,colonequals}

\usepackage{tikz}
\usetikzlibrary{calc}

\usepackage{graphicx}

\usepackage{xcolor}

\usepackage{float} 

\renewcommand{\comment}[1]{} 

\numberwithin{equation}{section}

\newtheorem{thm}[equation]{Theorem}

\newtheorem{lemma}[equation]{Lemma}

\theoremstyle{definition}
\newtheorem{remark}[equation]{Remark}

\newtheorem{definition}[equation]{Definition}

\newtheorem{prop-def}{Proposition-Definition}

\begin{document} 
\begin{abstract}
Minimal prime graphs (MPGs) are a special class of prime graphs (also known as  Gruenberg-Kegel graphs) associated with finite solvable groups. A graph is an MPG if it has at least two vertices, is connected, its complement is triangle-free and 3-colorable, and the addition of an edge to the complement will violate triangle-freeness or 3-colorability. In this paper, we continue the study of the complements of MPGs focusing on their cycle structure. Our main result establishes that every edge in the complement of an MPG is contained in a 5-cycle. This finding is a much stronger form of an older
result stating that every minimal prime graph complement contains at least one induced 5-cycle.
\end{abstract}
\maketitle
\section{Introduction}
The Gruenberg-Kegel graph, or prime graph, of a finite 
group $G$ is the graph whose vertices are the prime 
divisors of $|G|$, with an edge between distinct primes $p$ and $q$ if and only if
$G$ contains an element of order $pq$. Since their introduction in the 1970s, 
prime graphs have continuously been an object of interest.\\

A 2015 result by Gruber, Keller, Lewis, Naughton, and Strasser \cite{gruber} 
characterizes the prime graphs of solvable groups as follows: a simple graph is isomorphic to the 
prime graph of a finite solvable group if and only if its complement is triangle-free and 
3-colorable. This characterization has inspired a line of research into special 
subclasses of prime graphs, particularly those with extremal or minimal properties.

The focus of this paper are the \emph{minimal prime graphs} (MPGs) as introduced in 
\cite{gruber} and studied in more detail in \cite{florez}. These are connected graphs on at least two vertices whose 
complements are triangle-free and 3-colorable, and for which the addition of any edge 
to the complement violates one of these properties. On the group theoretical side, the prime graph
of a solvable group is an MPG if it has a maximal number of Frobenius 
actions among its Sylow subgroups.\\

It was already noted in \cite{gruber} that MPGs 
(and thus their complements) always have an induced 5-cycle. 
Later work \cite{huang} introduced 
new methods for generating MPGs beyond vertex duplication, 
and in \cite{kou} it is shown that the minimal degree of the
complement of an MPG is at least 2, and MPGs with a vertex 
of degree 2 are completely classified in \cite{micah}.\\

In the main result of this paper, we establish a far-reaching
and as we believe, 
unexpected strengthening of the above statement of the 
existence of 5-cycles in MPGs as follows:\\

{\bf Theorem:}
\textit{Every edge in the 
complement of a minimal prime graph is contained in a 5-cycle}. \\

Groups whose prime graph complement (and thus their
prime graph as well) is a 5-cycle have a unique Frobenius
digraph (in the sense of \cite{gruber}) and thus a Sylow structure as described in \cite[Theorem 2.2(a)]{keller1994}. (Note that by the Hartley-Turull lemma \cite[Theorem 3.31]{isaacs} any MPG 
can be realized by a group with abelian Fitting subgroup
of square-free order.) The main
result of this paper shows that this type of subgroup pervades
groups whose prime graph is an MPG.\\

The proof of our main result involves constructing and analyzing a family of 
subgraphs $\Gamma(m,n,k,l,x,y)$, and showing that any edge 
either belongs to a 5-cycle or leads to a larger subgraph 
of the same type, enabling an inductive argument.\\

Our main result puts MPGCs in the family of triangle-free 
graphs where every edge belongs to a 5-cycle. This family
has drawn some interest, see e.g. \cite{mathoverflow}.
It seems
impossible to classify this larger family of graphs,
but it contains some
prominent examples such as the Petersen graph which
is indeed an MPGC, and also the generalized Petersen
graphs $P(n,2)$. In general, though, this class of graphs
seems huge. However, with the extra restrictions that come
with MPGCs, there remains hope that one day it might be possible to 
completely classify them.\\

\textbf{Acknowledgments}: The authors are grateful for the support of this research by the Mathworks program at Texas State University. They also thank Ronok Ghosal for some discussions on the project.

\section{Notation and Definitions}
\begin{itemize}[leftmargin=0.5cm]
    \item The vertex and edge set of $\Gamma$ are denoted $V_{\Gamma}, E_{\Gamma},$ respectively.
    \item $\overline\Gamma$ denotes the complement of $\Gamma$
    \item A cycle of length $n$ in a graph $\Gamma$ is denoted $(c_1, c_2 \ldots, c_n)$ where $c_i\in V_\Gamma$ and $(c_i, c_{i + 1}) \in E_\Gamma$ for $i = 1, 2, \ldots n$ where $c_{n + 1} = c_1.$ 
    \item A path of length $n$ in a graph $\Gamma$ is denoted $\{c_1, c_2 \ldots, c_n\}$ where $c_i\in V_\Gamma$ and $(c_i, c_{i + 1}) \in E_\Gamma$ for $i = 1, 2, \ldots n - 1$. 
    \item For indices, unless otherwise specified: 
    \begin{itemize}[leftmargin=0.5cm]
        \item Characters with no subscript, e.g. $i, j, k$, are fixed.
        \item Characters with a subscript, e.g. $n_1, n_2$, are meant to be considered for all values within the range.
    \end{itemize}
\end{itemize}
\begin{definition}(Minimal Prime Graphs)
    A minimal prime graph (MPG) $\Gamma$ is a graph with 2 or more vertices satisfying:
    \begin{enumerate}[leftmargin=0.5cm]
        \item $\Gamma$ is connected
        \item $\overline\Gamma$ is triangle-free
        \item $\overline\Gamma$ is 3-colorable
        \item Removing any edge from $\Gamma$ will violate (2) or (3). This is often referred to as the maximality condition because of how we work in the complement. 
    \end{enumerate}
\end{definition}
\begin{definition}
    Let $\Gamma$ be an MPG, and $u, v, h \in V_\Gamma$ such that $u,v$ are different colors. The statement ``the vertex $h$ blocks the edge $(u,v)$'' means that $(u, v)\not\in E_{\overline\Gamma},$ but the edges $(h, u), (h, v) \in E_{\overline\Gamma}.$ (This is from the maximality condition on triangle-freeness)
\end{definition}
From here, we refer to the complement of an MPG as an MPGC.

\section{Preliminaries}
\begin{thm}\label{connected}
    MPGCs are connected.
\end{thm}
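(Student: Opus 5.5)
We argue by contradiction, using the maximality condition. Let $\Gamma$ be an MPG and suppose $\overline\Gamma$ is disconnected. Split $V_\Gamma = A \cup B$ with $A,B$ nonempty and no edges of $\overline\Gamma$ between $A$ and $B$. We will add one edge between $A$ and $B$ to $\overline\Gamma$ while keeping it triangle-free and 3-colorable. This amounts to removing an edge from $\Gamma$ without violating conditions (2) or (3), contradicting condition (4).

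\textbf{Triangle-freeness.} Pick any $a\in A$ and $b\in B$. Then $(a,b)$ is an edge of $\Gamma$, since it is a non-edge of $\overline\Gamma$. Adding $(a,b)$ to $\overline\Gamma$ creates a triangle only if some vertex $h$ is adjacent in $\overline\Gamma$ to both $a$ and $b$. Such an $h$ would lie in $A$ or in $B$, and either way it would give an edge of $\overline\Gamma$ between $A$ and $B$. This is impossible, so no vertex blocks $(a,b)$ and the new graph is triangle-free.

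\textbf{3-colorability.} Fix proper 3-colorings of $\overline\Gamma[A]$ and $\overline\Gamma[B]$; their union is a proper 3-coloring of $\overline\Gamma$ because there are no cross edges. Permuting the colors on $B$ does not affect properness, since $B$ has no neighbors in $A$. So we may arrange that $a$ and $b$ get different colors. The resulting coloring stays proper after adding $(a,b)$.

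Hence $\overline\Gamma$ plus the edge $(a,b)$ is triangle-free and 3-colorable, contradicting maximality. So $\overline\Gamma$ is connected.

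\textbf{Main obstacle.} There is essentially none. The only points needing care are that $(a,b)$ is a genuine edge of $\Gamma$ (it is, being a non-edge of $\overline\Gamma$) and the color-permutation step on one component. The hypothesis $|V_\Gamma|\ge 2$ is used only implicitly: disconnectedness already requires two vertices, so the argument needs no separate case.
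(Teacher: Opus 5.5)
Your proof is correct and follows the same route as the paper: you add an edge $(a,b)$ across the disconnection, observe that any blocking vertex would itself give a cross edge, and permute the colors on one side so that $a$ and $b$ receive different colors, which contradicts maximality. Your justification of the triangle-freeness step (a common neighbor must lie in $A$ or in $B$) is a bit more explicit than the paper's one-line remark.
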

\begin{proof}
    Assume for the sake of contradiction that there exists some MPG $\Gamma$ where $\overline\Gamma$ is disconnected and can be split into subgraphs $\Pi_1$ and $\Pi_2$. Pick any vertex $v_1 \in V_{\Pi_1},$ and $v_2 \in V_{\Pi_2}.$ Since $\overline\Gamma$ is disconnected, there does not exist an edge between $v_1$ and $v_2$. By the definition of MPG, we have that the maximality condition implies that, no edge can be added to $\overline\Gamma$ without violating either triangle-freeness or 3-colorability.

    Now, consider adding the edge $(v_1, v_2)$ into $\overline\Gamma.$ 
    
    There cannot be a violation of triangle-freeness, or else $\overline\Gamma$ is not disconnected, which is a contradiction. 
    
    Now, we prove there cannot be a violation of 3-colorability. For every coloring of $\overline\Gamma$ not violating 3-colorability, note that we can cycle the colors of $\Pi_1$ without violating 3-colorability, which means we can change red to blue, blue to green, and green to red. We start by picking any coloring of $\overline\Gamma.$ 
    \begin{itemize}[leftmargin=0.5cm]
        \item If the colors of $v_1, v_2$ are different, then the edge $(v_1,v_2)$ can be added without violating 3-colorability or triangle-freeness, leading to a contradiction to the maximality condition as we assumed that $\Gamma$ is an MPG.
        \item If the colors of $v_1, v_2$ are the same, we can cycle the colors once in $\Pi_1,$ which will result in $v_1$ and $v_2$ being colored differently, implying that the edge $(v_1, v_2)$ can be added, which leads to a contradiction.
    \end{itemize}

    Thus, in all cases there is a contradiction which means that no MPG exists where its complement is disconnected.
\end{proof}
The next two results are \cite[Corollary 11]{kou} and \cite[Proposition 12]{kou}, respectively.
\begin{thm}\label{deg2}
    In an MPGC, each vertex has degree of at least 2.
\end{thm}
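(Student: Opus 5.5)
The plan is a direct argument from the maximality condition, with connectivity of both $\Gamma$ and $\overline\Gamma$ (Theorem~\ref{connected}) ruling out the degenerate cases. Let $\Gamma$ be an MPG and $v$ a vertex of $\overline\Gamma$.

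First, $\overline\Gamma$ has at least two vertices and is connected by Theorem~\ref{connected}, so $v$ has degree at least $1$. Suppose, for contradiction, that $v$ has degree exactly $1$ in $\overline\Gamma$, with unique neighbor $u$. Fix a proper 3-coloring of $\overline\Gamma$. The key observation is that $v$ is constrained only by $u$, so we may recolor $v$ to any color different from the color of $u$ and still have a proper coloring.

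Next, take any vertex $w \neq u,v$; then $(v,w)\notin E_{\overline\Gamma}$. Suppose $w$ is not adjacent to $u$.
\begin{itemize}
\item Adding $(v,w)$ creates no triangle, since the only neighbor of $v$ is $u$ and $u$ is not adjacent to $w$.
\item Adding $(v,w)$ preserves 3-colorability: recolor $v$ with a color different from the colors of both $u$ and $w$, which is possible with three colors.
\end{itemize}
This contradicts the maximality condition. Hence every vertex other than $u$ and $v$ is adjacent to $u$, so $u$ is adjacent to every other vertex of $\overline\Gamma$.

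Finally, since $\overline\Gamma$ is triangle-free, the neighborhood of $u$ is an independent set. Therefore $\overline\Gamma$ is a star centered at $u$ (when $\overline\Gamma$ has only the two vertices $u,v$, it is the single edge $uv$, which is also a star). The complement of a star, namely $\Gamma$, has $u$ as an isolated vertex. Since $\Gamma$ has at least two vertices, $\Gamma$ is disconnected, contradicting condition (1) in the definition of an MPG. So no vertex has degree $1$, and every vertex has degree at least $2$.

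The only step needing care is the recoloring of $v$. It must be justified that the rest of the coloring stays fixed and that three colors always leave a choice avoiding both the color of $u$ and the color of $w$. Both facts are immediate because $v$'s only neighbor is $u$ and at most two colors need to be avoided.
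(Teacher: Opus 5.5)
Your proof is correct. It necessarily takes a different route from the paper, because the paper gives no argument at all: Theorem \ref{deg2} is simply imported from \cite[Corollary 11]{kou}.

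You instead prove it directly from the definition:
\begin{itemize}
\item A vertex $v$ of degree one with neighbor $u$ can be recolored freely, avoiding only the color of $u$.
\item So for any $w\neq u,v$ not adjacent to $u$, the edge $vw$ could be added to $\overline\Gamma$ without creating a triangle or destroying 3-colorability. Maximality therefore forces $u$ to be adjacent to every other vertex.
\item Triangle-freeness then collapses $\overline\Gamma$ to the star $K_{1,n-1}$, whose complement has $u$ isolated.
\end{itemize}
You are right that condition (1) is indispensable at this last step. Stars satisfy triangle-freeness, 3-colorability and the maximality condition, since every added leaf--leaf edge closes a triangle through the center. Only connectivity of $\Gamma$ excludes them.

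A small simplification: you do not need Theorem \ref{connected} to get $\deg v\ge 1$. If $v$ were isolated, the same recoloring trick shows that any edge $vw$ could be added, contradicting maximality.

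The citation buys the paper brevity. Your argument buys a short, transparent proof that uses nothing beyond the definition. It also isolates the exact obstruction: whenever $\Gamma$ satisfies conditions (2)--(4), a vertex of degree one in $\overline\Gamma$ forces $\overline\Gamma$ to be a star.
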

\begin{thm}\label{dia23} 
    MPGCs have diameter 2 or 3. 
\end{thm}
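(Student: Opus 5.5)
The plan is to prove the two bounds separately: the lower bound $\operatorname{diam}\ge 2$ from the smallest cases, and the upper bound $\operatorname{diam}\le 3$ from the maximality condition. Since the complement of an MPG, $\overline\Gamma$, is connected by Theorem~\ref{connected}, its diameter is finite.

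\emph{Lower bound.} Suppose the diameter were at most $1$. Since $\overline\Gamma$ has at least two vertices, it would then be a complete graph on at least two vertices. A triangle-free complete graph on at least two vertices must be $K_2$. But then $\Gamma$ consists of two isolated vertices, so $\Gamma$ is disconnected, contradicting condition (1) of the definition of an MPG. Hence the diameter is at least $2$.

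\emph{Upper bound.} The key observation concerns two non-adjacent vertices $u,w$ of $\overline\Gamma$ with no common neighbour. Adding the edge $(u,w)$ to $\overline\Gamma$ creates no triangle. By the maximality condition, the new graph must then fail to be 3-colorable. This means that in \emph{every} proper 3-coloring of $\overline\Gamma$ the vertices $u$ and $w$ receive the same color; otherwise that coloring would remain proper after adding $(u,w)$.

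Now suppose for contradiction that $\operatorname{dist}(u,v)\ge 4$ for some $u,v$. By Theorem~\ref{deg2}, $v$ has a neighbour $w$ (any positive degree would suffice here). Then:
\begin{itemize}
\item $\operatorname{dist}(u,w)\ge \operatorname{dist}(u,v)-1\ge 3$;
\item both pairs $\{u,v\}$ and $\{u,w\}$ are non-adjacent and have no common neighbour.
\end{itemize}
Fix any proper 3-coloring $c$ of $\overline\Gamma$. Applying the observation to both pairs gives $c(u)=c(v)$ and $c(u)=c(w)$, so $c(v)=c(w)$. This contradicts the fact that $(v,w)$ is an edge of $\overline\Gamma$. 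Therefore every pair of vertices is at distance at most $3$, and the diameter is $2$ or $3$.

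The step I expect to be the main obstacle is the upper bound, specifically turning maximality into a usable statement. The natural first attempt is to take a coloring with $c(u)=c(v)$ and recolor part of it so that $u$ and $v$ differ. This runs into propagation problems through the layers at distance $2$ and beyond. The cleaner route is to use the color constraint twice, on the pairs $(u,v)$ and $(u,w)$ with $w$ adjacent to $v$, so that no recoloring is needed.
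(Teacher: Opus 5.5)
Your proof is correct, and it takes a different route from the paper because the paper gives no argument here: it imports the statement as \cite[Proposition 12]{kou}. Your key observation is that two non-adjacent vertices of $\overline\Gamma$ with no common neighbor must receive the same color in every proper 3-coloring. This is just the contrapositive of the paper's ``blocking'' notion. Applying it to the two pairs $\{u,v\}$ and $\{u,w\}$, with $w$ adjacent to $v$, gives the upper bound at once and needs no recoloring. Your lower bound correctly combines triangle-freeness of $\overline\Gamma$ with connectedness of the MPG $\Gamma$ itself.

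What your route buys is a self-contained proof and a clear view of which hypotheses matter. The upper bound needs only 3-colorability, the maximality condition, and the existence of some neighbor of $v$. That neighbor already exists by Theorem~\ref{connected}, since a connected graph on at least two vertices has no isolated vertex. So Theorem~\ref{deg2}, which the paper also imports from \cite{kou}, is not needed for this statement. The citation, in turn, keeps the paper short and groups the two facts taken from \cite{kou}, Theorems~\ref{deg2} and~\ref{dia23}, as the inputs to the final proof of Theorem~\ref{5-complement}.
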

\section{Main result}
We begin by proving the following result which will be beneficial in the future. 
\begin{lemma} \label{7-cycle}
    Let there be a 7-cycle that is a subgraph of an MPGC. Pick a valid coloring of the MPGC. If each of the 3 colors occur at least twice in the 7-cycle, then each edge in this 7-cycle subgraph is contained in a 5-cycle. 
\end{lemma}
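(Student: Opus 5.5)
The plan is to use the 3-coloring together with the maximality condition to produce, for suitable pairs of vertices of the 7-cycle, either a chord or a blocking vertex, and then to check that these cover all seven edges. Write the 7-cycle as $(c_1,\dots,c_7)$ with indices mod 7.

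\textbf{Step 1 (basic observation).} If $u,v\in V_{\overline\Gamma}$ have different colors in the fixed coloring and $(u,v)\notin E_{\overline\Gamma}$, then adding $(u,v)$ keeps this coloring proper. So by maximality the addition must create a triangle, i.e.\ some vertex $h$ blocks $(u,v)$. Triangle-freeness also rules out chords $c_ic_{i+2}$, so the only possible chords of the 7-cycle join vertices at cyclic distance 3.

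\textbf{Step 2 (the key dichotomy).} Fix $i$ with $c_i$ and $c_{i+3}$ of different colors. There are two cases.
\begin{itemize}[leftmargin=0.5cm]
\item If $(c_i,c_{i+3})\in E_{\overline\Gamma}$, then $(c_{i+3},c_{i+4},c_{i+5},c_{i+6},c_i)$ is a 5-cycle. It contains the four edges of the long arc from $c_{i+3}$ to $c_i$.
\item Otherwise some $h$ blocks $(c_i,c_{i+3})$. Here $h\ne c_{i+1},c_{i+2}$, since each is already adjacent to one endpoint and the other adjacency would be a distance-2 chord, giving a triangle. Likewise $h\neq c_{i+5},c_{i+6}$. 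If $h=c_{i+4}$, the chord $c_ic_{i+4}$ gives the 5-cycle $(c_i,\dots,c_{i+4})$; symmetrically if $h=c_{i-1}$. Otherwise $h$ is off the cycle and $(c_i,c_{i+1},c_{i+2},c_{i+3},h)$ is a 5-cycle.
\end{itemize}
In every non-chord subcase, the three edges of the short arc $c_i c_{i+1} c_{i+2} c_{i+3}$ lie in a 5-cycle.

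So each ``bichromatic distance-3 pair'' certifies that either its short arc (3 edges) or its complementary long arc (4 edges) is covered by 5-cycles.

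\textbf{Step 3 (combinatorics of the coloring).} The color classes have sizes $3,2,2$, and each class is independent on the cycle, so the class of size 3 consists of vertices pairwise at distance $\ge 2$. The seven distance-3 pairs $\{c_i,c_{i+3}\}$ form a 7-cycle themselves, since stepping by 3 generates $\mathbb{Z}_7$. A monochromatic pair must come from one color class. I will enumerate the few colorings up to rotation and reflection (the 3-class is, up to symmetry, $\{c_1,c_3,c_5\}$ or $\{c_1,c_3,c_6\}$, and then the 2-classes are forced up to a few choices) and count the monochromatic distance-3 pairs. I expect at most three, leaving at least four bichromatic pairs.

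For each edge $e=c_jc_{j+1}$, I then look at the bichromatic pairs whose short arc contains $e$ and those whose long arc contains $e$. The goal is to show that for every edge there are bichromatic pairs $P$ and $Q$ with $e$ in the short arc of $P$ and in the long arc of $Q$, and such that the chord and non-chord alternatives cannot both fail. Concretely: if every pair whose short arc contains $e$ is a chord, I will use one of those chords to exhibit another 5-cycle through $e$, possibly by combining two chords.

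\textbf{Main obstacle.} The hardest part is this final covering check. Two chords $c_ic_{i+3}$ and $c_{i+1}c_{i+4}$ can coexist, so the covering must be verified configuration by configuration. In the case where all relevant pairs turn out to be chords, the chord itself can be used to reroute a 5-cycle through $e$, for example $c_i c_{i+3}$ together with the edges on the other side.
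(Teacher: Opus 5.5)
Your Steps 1 and 2 are correct, apart from one slip: since $c_{i+6}=c_{i-1}$, the only cycle vertices excluded as blockers are $c_{i+1},c_{i+2},c_{i+5}$, and you do go on to treat $h=c_{i-1}$ properly. The gap is Step 3. It carries the entire substance of the lemma and is only announced, and the strategy you announce for it cannot work as stated.

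Take the coloring with classes $\{c_1,c_3,c_5\}$, $\{c_2,c_6\}$, $\{c_4,c_7\}$. The three distance-3 pairs whose short arc contains the edge $c_6c_7$ are $\{c_4,c_7\}$, $\{c_5,c_1\}$ and $\{c_6,c_2\}$, and all three are monochromatic. So there is no bichromatic $P$ with $c_6c_7$ in its short arc, and your stated goal fails for this edge. If none of the four bichromatic pairs $\{c_1,c_4\},\{c_2,c_5\},\{c_3,c_6\},\{c_3,c_7\}$ is a chord, Step 2 says nothing about $c_6c_7$, and there is no chord to reroute through.

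Your chord-based fallback also fails elsewhere. In the same coloring, suppose $c_1c_4$ and $c_3c_7$ are chords while $\{c_2,c_5\}$ and $\{c_3,c_6\}$ are not. Then $c_1c_2$ is missed by the dichotomy. The 7-cycle plus these two chords has no 5-cycle through $c_1c_2$; it only has the 4-cycles $(c_1,c_2,c_3,c_4)$ and $(c_1,c_2,c_3,c_7)$. Likewise, the ``chord plus edges on the other side'' cycle is just the long-arc cycle, which never contains an edge of the short arc.

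The missing idea is to combine the blocking vertices from Step 2 with each other and with chords. This needs control over where a blocker can sit. A blocker must carry the third color, so it can equal $c_{i+4}$ or $c_{i-1}$ only if that vertex has that color and the corresponding second distance-3 pair is a chord. Otherwise it lies off the cycle, and blockers of different colors are automatically distinct.

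With this, both examples close:
\begin{itemize}[leftmargin=0.5cm]
\item In the first, the blockers $h$ of $\{c_3,c_6\}$ and $h'$ of $\{c_3,c_7\}$ are off the cycle and distinct, so $(c_3,h,c_6,c_7,h')$ is a 5-cycle.
\item In the second, the blocker $h_2$ of $\{c_2,c_5\}$ is off the cycle, and $(c_1,c_2,h_2,c_5,c_4)$ is a 5-cycle.
\end{itemize}

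This mixed chord/blocker analysis has to be carried out for every failure pattern, in both colorings (the other one has classes $\{c_1,c_3,c_5\},\{c_2,c_7\},\{c_4,c_6\}$). That is exactly what the paper's casework on the pairs $(a,d)$, $(c,g)$, $(b,e)$, $(b,f)$, $(e,a)$ supplies. Your distance-3-pair framework is a clean way to organize that casework, but without it the argument is a plan rather than a proof.
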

\begin{proof}
    There are only two distinct colorings (up to an isomorphism) such that each of the 3 colors show up more than once:
    \begin{center}
\begin{tikzpicture}[
    scale=0.7, 
    dot/.style={
        circle, 
        minimum size=6mm, 
        inner sep=0pt,    
        font=\small\itshape, 
        text=white
    },
    line/.style={thick, black}
]

    \definecolor{green!80!black}{RGB}{0, 100, 0}

    \begin{scope}[xshift=-2cm]
        \foreach \i [count=\n from 0] in {a,b,c,d,e,f,g} {
            \coordinate (L\i) at ({360/7 * \n}:1.5);
        }
        \draw[line] (La) -- (Lb) -- (Lc) -- (Ld) -- (Le) -- (Lf) -- (Lg) -- cycle;
        
        \node[dot, fill=blue] at (La) {$a$};
        \node[dot, fill=red] at (Lb) {$b$};
        \node[dot, fill=green!80!black] at (Lc) {$c$};
        \node[dot, fill=red] at (Ld) {$d$};
        \node[dot, fill=blue] at (Le) {$e$};
        \node[dot, fill=green!80!black] at (Lf) {$f$};
        \node[dot, fill=red] at (Lg) {$g$};
    \end{scope}

    \begin{scope}[xshift=2cm]
        \foreach \i [count=\n from 0] in {a,b,c,d,e,f,g} {
            \coordinate (R\i) at ({360/7 * \n}:1.5);
        }
        \draw[line] (Ra) -- (Rb) -- (Rc) -- (Rd) -- (Re) -- (Rf) -- (Rg) -- cycle;
        
        \node[dot, fill=blue] at (Ra) {$a$};
        \node[dot, fill=red] at (Rb) {$b$};
        \node[dot, fill=green!80!black] at (Rc) {$c$};
        \node[dot, fill=red] at (Rd) {$d$};
        \node[dot, fill=green!80!black] at (Re) {$e$}; 
        \node[dot, fill=blue] at (Rf) {$f$};       
        \node[dot, fill=red] at (Rg) {$g$};
    \end{scope}

\end{tikzpicture}
    \end{center} 
    Note that the only difference is that the color of $e$ and $f$ are swapped. In the argument that follows, we will consider both cases at the same time, except for a few cases. Notice that in the example graphs, the colorings of vertices $e$ and $f$ have 2 colors, to demonstrate these two cases being considered at once.

    We will do casework on the edge $(a, d):$
    \begin{enumerate}[leftmargin=0.5cm]
        \item If $(a,d)$ exists, then the 5-cycle $(a, d, e, f, g)$ contains all edges but $(a, b)$, $(b, c)$, $(c, d)$. Now look at the edge $(c, g):$
        
        If $(c, g)$ does not exist, then suppose the blue vertex $h$ blocks the edge. If $h=e,$ then $(c,d,e)$ is a triangle. If $h = a,$ then $(a,b,c)$ is a triangle. The 5-cycles $(a, b, c, h, g)$, $(g, a, d, c, h)$ cover the edges. 
        \begin{center}
\begin{tikzpicture}[
    scale=0.7,
    dot/.style={
        circle, 
        minimum size=6mm, 
        inner sep=0pt, 
        font=\tiny, 
        text=white,
        line width=0.4pt 
    },
ring_dot/.style={
        circle, 
        minimum size=6mm - 4pt, 
        inner sep=0pt, 
        font=\tiny, 
        text=white,
        fill=green!80!black,   
        draw=blue,         
        line width=4pt   
    },
ring_dot_2/.style={
        circle, 
        minimum size=6mm - 4pt, 
        inner sep=0pt, 
        font=\tiny, 
        text=white,
        fill=blue,   
        draw=green!80!black,         
        line width=4pt   
    }
]

    \definecolor{green!80!black}{RGB}{0, 100, 0}

    \coordinate (h) at (0,0);
    \foreach \n [count=\i from 0] in {a,b,c,d,e,f,g} {
        \coordinate (\n) at ({360/7 * \i}:2);
    }

    \draw[thick] (a) -- (b) -- (c) -- (d) -- (e) -- (f) -- (g) -- cycle;
    \draw[thick] (a) -- (d);
    \draw[thick] (c) -- (h) -- (g);

    \draw[cyan, thick] (a) -- (b) -- (c) -- (h) -- (g) -- cycle;
    \draw[magenta, thick] (c) -- (d) -- (a) -- (g) -- (h) -- cycle;
    \draw[cyan, dashed, thick] (c) -- (h) -- (g) -- (a);

    \node[dot, fill=blue] at (a) {$a$};
    \node[dot, fill=red] at (b) {$b$};
    \node[dot, fill=green!80!black] at (c) {$c$};
    \node[dot, fill=red] at (d) {$d$};
    \node[ring_dot, fill=green!80!black] at (e) {$e$};
    \node[ring_dot_2, fill=blue] at (f) {$f$};
    \node[dot, fill=red] at (g) {$g$};
    \node[dot, fill=blue] at (h) {$h$};

\end{tikzpicture}
        \end{center}

        Now consider if $(c, g)$ does exist. We do casework on the edge $(b,e).$ 

        If $(b,e)$ does not exist, let $h$ block the edge. Then, $h$ is either green or blue(dependent on $e$). If $h = c,$ $(c,d,e)$ is a triangle. If $h = a,$ $(a,d,e)$ is a triangle. If $h = f$ or $h$ is a new vertex, then $(b,c,d,e,h)$, $(a,b,h,e,d)$ covers the rest of the edges. 

        If $(b,e)$ exists, then $(c,d,e,f,g)$, $(b,c,g,f,e)$, $(b, e, f, g, a)$ finish. 
        \item If $(a, d)$ does not exist, there exists a green vertex $h$ that will block the edge $(a, d)$. If $h = c$, then $(a,b,c)$ is a triangle. If $h = f$, then $(d,e,f)$ is a triangle. 
        
        We will do casework on $h = e$ or $h$ is a new vertex. 
        \begin{itemize}[leftmargin=0.5cm]
            \item If $h = e,$ then notice $(e, d, c, b, a).$ We now do casework on the edge $(b,f)$:
            \begin{itemize}[leftmargin=0.5cm]
                \item If $(b, f)$ does not exist, let the green vertex $i$ block it. If $i = e,$ then $(a,b,e)$ is a triangle. If $i$ is a new vertex or if $i = c,$ observe the 5-cycles $(e, f, i, b, a)$ and $(f, g, a, b, i),$ which cover the rest of the edges. 
                \begin{center}
\begin{tikzpicture}[
    scale=0.7,
    dot/.style={
        circle, 
        minimum size=6mm, 
        inner sep=0pt,    
        font=\tiny, 
        text=white,
        thick
    }
]

    \definecolor{green!80!black}{RGB}{0, 100, 0}

    \coordinate (j) at (0,0);
    \foreach \n [count=\i from 0] in {a,b,c,d,e,f,g} {
        \coordinate (\n) at ({360/7 * \i}:2);
    }

    \draw[thick] (a) -- (b) -- (c) -- (d) -- (e) -- (f) -- (g) -- cycle;
    
    \draw[thick] (e) -- (a);
    \draw[thick] (b) -- (j) -- (f);

    \node[dot, fill=blue] at (a) {$a$};
    \node[dot, fill=red] at (b) {$b$};
    \node[dot, fill=green!80!black] at (c) {$c$};
    \node[dot, fill=red] at (d) {$d$};
    \node[dot, fill=green!80!black] at (e) {$e$};
    \node[dot, fill=blue] at (f) {$f$};
    \node[dot, fill=red] at (g) {$g$};
    
    \node[dot, fill=green!80!black] at (j) {$i$};

\end{tikzpicture}
                \end{center} 
                \item We have the 5-cycle $(b,c,d,e,f),$ which means we only need to prove $(f, g), (a, g)$ are covered by 5-cycles. 
            
                If $(c, g)$ exists, notice the 5-cycles $(c, d, e, a, g)$ and $(c, d, e, f, g)$ which cover the rest of the edges. 
                
                Thus, let $j$ be a blue vertex that blocks the edge $(c, g).$ If $j = f,$ $(b,c,f)$ is a triangle. If $j = a,$ $(a,b,c)$ is a triangle. Hence, assume $j$ is a new vertex. However, notice the 5-cycles $(c, j, g, f, b)$ and $(a, b, c, j, g)$ which cover the rest of the edges. 
                \begin{center}
\begin{tikzpicture}[
    scale=0.7,
    dot/.style={
        circle, 
        minimum size=6mm, 
        inner sep=0pt,    
        font=\tiny, 
        text=white,
        thick
    }
]

    \definecolor{green!80!black}{RGB}{0, 100, 0}

    \coordinate (j) at (0,0);
    \foreach \n [count=\i from 0] in {a,b,c,d,e,f,g} {
        \coordinate (\n) at ({360/7 * \i}:2);
    }

    \draw[thick] (a) -- (b) -- (c) -- (d) -- (e) -- (f) -- (g) -- cycle;
    \draw[thick] (e) -- (a);
    \draw[thick] (b) -- (f);

    \draw[cyan, dashed, thick] (c) -- (j) -- (g);
    \draw[magenta, dashed, thick] (c) -- (g);

    \node[dot, fill=blue] at (a) {$a$};
    \node[dot, fill=red] at (b) {$b$};
    \node[dot, fill=green!80!black] at (c) {$c$};
    \node[dot, fill=red] at (d) {$d$};
    \node[dot, fill=green!80!black] at (e) {$e$};
    \node[dot, fill=blue] at (f) {$f$};
    \node[dot, fill=red] at (g) {$g$};
    \node[dot, fill=blue] at (j) {$j$};

\end{tikzpicture}
                \end{center} 
            \end{itemize}
            \item Thus, we examine the case where $h$ is a new vertex.
        
            First notice the 5-cycles $(a, b, c, d, h).$ Now we look at proving that the edges $(d, e)$, $(e, f)$,$(f, g)$, $(g, a)$ are contained in a 5-cycle. Consider the edge $(e, b).$ 
            
            If $(e, b)$ exists, then the 5-cycles $(b, a, g, f, e),$ and $(d, e, b, a, h)$ cover the rest.  
            \begin{center}
\begin{tikzpicture}[
    scale=0.7,
    dot/.style={
        circle, 
        minimum size=6mm, 
        inner sep=0pt,    
        font=\tiny, 
        text=white,
        thick
    },
ring_dot/.style={
        circle, 
        minimum size=6mm - 4pt, 
        inner sep=0pt, 
        font=\tiny, 
        text=white,
        fill=green!80!black,   
        draw=blue,         
        line width=4pt   
    },
ring_dot_2/.style={
        circle, 
        minimum size=6mm - 4pt, 
        inner sep=0pt, 
        font=\tiny, 
        text=white,
        fill=blue,   
        draw=green!80!black,         
        line width=4pt   
    }]

    \definecolor{green!80!black}{RGB}{0, 100, 0}

    \coordinate (h) at (0,0);
    \foreach \n [count=\i from 0] in {a,b,c,d,e,f,g} {
        \coordinate (\n) at ({360/7 * \i}:2);
    }

    \draw[thick] (a) -- (b) -- (c) -- (d) -- (e) -- (f) -- (g) -- cycle;
    
    \draw[thick] (a) -- (h) -- (d);
    \draw[thick] (b) -- (e);

    \node[dot, fill=blue] at (a) {$a$};
    \node[dot, fill=red] at (b) {$b$};
    \node[dot, fill=green!80!black] at (c) {$c$};
    \node[dot, fill=red] at (d) {$d$};
    \node[ring_dot, fill=green!80!black] at (e) {$e$};
    \node[ring_dot_2, fill=blue] at (f) {$f$};
    \node[dot, fill=red] at (g) {$g$};
    \node[dot, fill=green!80!black] at (h) {$h$};

\end{tikzpicture}
            \end{center}
    
            Thus, we consider when $(e, b)$ does not exist. Since $(e, b)$ is symmetric to $(c, g),$ we only need to consider when $(c, g)$ does not exist. 
            
            Let $i$ be the vertex that blocks $(e, b)$ and $j$ be the blue vertex that blocks $(c, g)$ be $j$. We first handle the cases when $i, j$ are already part of the graph: 
            \begin{itemize}[leftmargin=0.5cm]
                \item If $j=e,$ we have $(c,d,e)$ and if $j=a,$ we have $(a,b,c).$ If $j = f,$ then consider $(e, a).$ If $(e, a)$ exists, we have $(e, a, b, c, d)$, $(e, a, b, c, f)$, and $(a,b,c,f,g)$. If $(e, a)$ does not exist, let the red vertex that blocks it be $k.$ If $k = d$, then $(a,h,d)$ is a triangle and if $k=g$, then $(e,f,g)$ is a triangle. If $k = b$ or if $k$ is a new vertex, then we have the 5-cycles $(e, k, a, g, f)$, $(e, k, a, h, d),$ which are unrelated to $b.$ 
                \begin{center}
\begin{tikzpicture}[
    scale=0.7,
    dot/.style={
        circle, 
        minimum size=6mm, 
        inner sep=0pt,    
        font=\tiny, 
        text=white,
        thick
    }
]

    \definecolor{green!80!black}{RGB}{0, 100, 0}

    \coordinate (k) at (0,0);
    \foreach \n [count=\i from 0] in {a,b,c,d,e,f,g} {
        \coordinate (\n) at ({360/7 * \i}:2.4);
    }
    
    \coordinate (h) at ({360/7 * 1.5}:0.72);

    \draw[thick] (a) -- (b) -- (c) -- (d) -- (e) -- (f) -- (g) -- cycle;
    \draw[thick] (a) -- (h) -- (d);
    \draw[thick] (c) -- (f) -- (g);

    \draw[cyan, dashed, thick] (e) -- (a);
    \draw[magenta, dashed, thick] (e) -- (k) -- (a);

    \node[dot, fill=blue] at (a) {$a$};
    \node[dot, fill=red] at (b) {$b$};
    \node[dot, fill=green!80!black] at (c) {$c$};
    \node[dot, fill=red] at (d) {$d$};
    \node[dot, fill=green!80!black] at (e) {$e$};
    \node[dot, fill=blue] at (f) {$f$};
    \node[dot, fill=red] at (g) {$g$};
    \node[dot, fill=green!80!black] at (h) {$h$};
    
    \node[dot, fill=red] at (k) {$k$};

\end{tikzpicture}
                \end{center} 
                Thus, we only need to consider when $j$ is a new vertex. 
            \item If $i = f,$ then we have the 5-cycles $(c, d, e, f, b)$, $(a,b,c,j,g)$, $ (f, g, j, c, b)$ which cover each edge. 
                \begin{center}
\begin{tikzpicture}[
    scale=0.7,
    dot/.style={
        circle, 
        minimum size=6mm, 
        inner sep=0pt,    
        font=\tiny, 
        text=white,
        thick
    },
ring_dot/.style={
        circle, 
        minimum size=6mm - 4pt, 
        inner sep=0pt, 
        font=\tiny, 
        text=white,
        fill=green!80!black,   
        draw=blue,         
        line width=4pt   
    },
ring_dot_2/.style={
        circle, 
        minimum size=6mm - 4pt, 
        inner sep=0pt, 
        font=\tiny, 
        text=white,
        fill=blue,   
        draw=green!80!black,         
        line width=4pt   
    }
]

    \definecolor{green!80!black}{RGB}{0, 100, 0}

    \foreach \n [count=\i from 0] in {a,b,c,d,e,f,g} {
        \coordinate (\n) at ({360/7 * \i}:2.4);
    }
    
    \coordinate (h) at ({360/7 * 1.5}:0.72);
    
    \coordinate (j) at ({360/7 * 4}:0.96);

    \draw[thick] (a) -- (b) -- (c) -- (d) -- (e) -- (f) -- (g) -- cycle;
    
    \draw[thick] (a) -- (h) -- (d);
    \draw[thick] (c) -- (j) -- (g);
    \draw[thick] (f) -- (b);

    \node[dot, fill=blue] at (a) {$a$};
    \node[dot, fill=red] at (b) {$b$};
    \node[dot, fill=green!80!black] at (c) {$c$};
    \node[dot, fill=red] at (d) {$d$};
    \node[ring_dot, fill=green!80!black] at (e) {$e$};
    \node[ring_dot_2, fill=blue] at (f) {$f$};
    \node[dot, fill=red] at (g) {$g$};
    \node[dot, fill=green!80!black] at (h) {$h$};
    \node[dot, fill=blue] at (j) {$j$};

\end{tikzpicture}
                \end{center} 
                If $i = c,$ then $(c,d,e)$ is a triangle. If $i = h,$ then $(a,b,h)$ is a triangle. If $i = j,$ $(c,b,j)$ is a triangle. Finally, we consider if $i = a.$  First notice $(a, b, c, d, e)$, $(a, b, c, j, g)$ which cover all edges but $(e, f), (f, g).$ Now, we do casework on $(b,f).$
                
                If $(b, f)$ exists, we have the 5-cycles: $(b, f, e, d, c), (b, c, j, g, f).$ 
    
                If $(b, f)$ does not exist, let the vertex that blocks it be $k.$ If $k = e,$ $(a,b,e)$ is a triangle. If $k=c$, $h$, or a new vertex, then two 5-cycles $(a, b, k, f, g)$, $(a, b, k, f, e)$ finish. 
                \begin{center}
\begin{tikzpicture}[
    scale=0.7,
    dot/.style={
        circle, 
        minimum size=6mm, 
        inner sep=0pt, 
        font=\tiny, 
        text=white,
        thick
    }
]

    \definecolor{green!80!black}{RGB}{0, 100, 0}

    \foreach \n [count=\i from 0] in {a,b,c,d,e,f,g} {
        \coordinate (\n) at ({360/7 * \i}:2.4);
    }
    
    \coordinate (h) at ({360/7 * 1.5}:0.72);
    
    \coordinate (j) at ({360/7 * 4}:0.96);
    
    \coordinate (k) at (1.4, 0);

    \draw[thick] (a) -- (b) -- (c) -- (d) -- (e) -- (f) -- (g) -- cycle;
    \draw[thick] (a) -- (h) -- (d);
    \draw[thick] (c) -- (j) -- (g);
    \draw[thick] (a) -- (e);

    \draw[cyan, dashed, thick] (b) -- (f);
    \draw[magenta, dashed, thick] (b) -- (k) -- (f);

    \node[dot, fill=blue] at (a) {$a$};
    \node[dot, fill=red] at (b) {$b$};
    \node[dot, fill=green!80!black] at (c) {$c$};
    \node[dot, fill=red] at (d) {$d$};
    \node[dot, fill=green!80!black] at (e) {$e$};
    \node[dot, fill=blue] at (f) {$f$};
    \node[dot, fill=red] at (g) {$g$};
    \node[dot, fill=green!80!black] at (h) {$h$};
    \node[dot, fill=blue] at (j) {$j$};
    \node[dot, fill=green!80!black] at (k) {$k$};

\end{tikzpicture}
                \end{center} 
            \end{itemize}
            Now, we can consider the cases when $i, j$ are new vertices.
    
            First observe the 5-cycles $(c, d, e, i, b), (a, b, c, j, g).$ Hence, we only need to prove that the edges $(e, f), (f, g)$ are contained in a 5-cycle. Now, consider the edge $(f, b).$
    
            If $(f, b)$ exists, then notice the 5-cycles $(f, g, j, c, b),(f, e, d, c, b).$ 
            \begin{center} 
\begin{tikzpicture}[
    scale=0.7,
    dot/.style={
        circle, 
        minimum size=6mm, 
        inner sep=0pt, 
        font=\tiny, 
        text=white,
        thick
    },
ring_dot/.style={
        circle, 
        minimum size=6mm - 4pt, 
        inner sep=0pt, 
        font=\tiny, 
        text=white,
        fill=green!80!black,   
        draw=blue,         
        line width=4pt   
    },
ring_dot_2/.style={
        circle, 
        minimum size=6mm - 4pt, 
        inner sep=0pt, 
        font=\tiny, 
        text=white,
        fill=blue,   
        draw=green!80!black,         
        line width=4pt   
    }]

    \definecolor{green!80!black}{RGB}{0, 100, 0}

    \foreach \n [count=\i from 0] in {a,b,c,d,e,f,g} {
        \coordinate (\n) at ({360/7 * \i}:2.4);
    }
    
    \coordinate (h) at ({360/7 * 1.5}:0.72);
    
    \coordinate (i) at ({360/7 * 6}:0.96);
    
    \coordinate (j) at ({360/7 * 4}:0.96);

    \draw[thick] (a) -- (b) -- (c) -- (d) -- (e) -- (f) -- (g) -- cycle;
    
    \draw[thick] (a) -- (h) -- (d);
    \draw[thick] (b) -- (i) -- (e);
    \draw[thick] (c) -- (j) -- (g);
    \draw[thick] (f) -- (b);

    \node[dot, fill=blue] at (a) {$a$};
    \node[dot, fill=red] at (b) {$b$};
    \node[dot, fill=green!80!black] at (c) {$c$};
    \node[dot, fill=red] at (d) {$d$};
    \node[ring_dot, fill=green!80!black] at (e) {$e$};
    \node[ring_dot_2, fill=blue] at (f) {$f$};
    \node[dot, fill=red] at (g) {$g$};
    
    \node[dot, fill=green!80!black] at (h) {$h$};
    \node[ring_dot_2, fill=blue] at (i) {$i$};
    \node[dot, fill=blue] at (j) {$j$};

\end{tikzpicture}
            \end{center} 
    
            If $(f, b)$ does not exist, then let the vertex that blocks the edge be $k.$ Note that $k$ is blue or green. If $k = e,$ $(a,b,e,f,g)$ finishes. If $k = a,$ $(a,f,g)$ exists. If $k = i$, then $(e,f,i)$ is a triangle. Hence, notice the 5-cycles $(f, g, a, b, k), (e, f, k, b, i)$ which finishes this case. 
            \begin{center}
\begin{tikzpicture}[
    scale=0.7,
    dot/.style={
        circle, 
        minimum size=6mm, 
        inner sep=0pt, 
        font=\tiny, 
        text=white,
        thick
    },
ring_dot/.style={
        circle, 
        minimum size=6mm - 4pt, 
        inner sep=0pt, 
        font=\tiny, 
        text=white,
        fill=green!80!black,   
        draw=blue,         
        line width=4pt   
    },
ring_dot_2/.style={
        circle, 
        minimum size=6mm - 4pt, 
        inner sep=0pt, 
        font=\tiny, 
        text=white,
        fill=blue,   
        draw=green!80!black,         
        line width=4pt   
    }]

    \definecolor{green!80!black}{RGB}{0, 100, 0}

    \foreach \n [count=\i from 0] in {a,b,c,d,e,f,g} {
        \coordinate (\n) at ({360/7 * \i}:2.4);
    }
    
    \coordinate (h) at ({360/7 * 1.5}:0.72); 
    \coordinate (i) at ({360/7 * 6}:0.96);   
    \coordinate (j) at ({360/7 * 4}:0.96);   
    
    \coordinate (k) at ({1080/7 + 180}:1.7);

    \draw[thick] (a) -- (b) -- (c) -- (d) -- (e) -- (f) -- (g) -- cycle;
    
    \draw[thick] (a) -- (h) -- (d);
    \draw[thick] (b) -- (i) -- (e);
    \draw[thick] (c) -- (j) -- (g);
    
    \draw[thick] (f) -- (k) -- (b);

    \node[dot, fill=blue] at (a) {$a$};
    \node[dot, fill=red] at (b) {$b$};
    \node[dot, fill=green!80!black] at (c) {$c$};
    \node[dot, fill=red] at (d) {$d$};
    \node[ring_dot, fill=green!80!black] at (e) {$e$};
    \node[ring_dot_2, fill=blue] at (f) {$f$};
    \node[dot, fill=red] at (g) {$g$};
    
    \node[dot, fill=green!80!black] at (h) {$h$};
    \node[ring_dot_2, fill=blue] at (i) {$i$};
    \node[dot, fill=blue] at (j) {$j$};
    \node[dot, fill=black] at (k) {$k$};

\end{tikzpicture}
            \end{center}
        \end{itemize}
    \end{enumerate}
    Thus, each edge in the 7-cycle is part of a 5-cycle. 
\end{proof}  
From here, all Lemmas/theorems which state that each edge is contained in a 5-cycle follow the same structure: 
\begin{enumerate}[leftmargin=0.5cm]
    \item We show that the original subgraph can not be an MPGC alone.
    \item We build onto the subgraph. We do this by adding vertices or edges.
    \item We finish our result. 
\end{enumerate}
Recall our notation at the start: For indices, unless otherwise specified: 
    \begin{itemize}[leftmargin=0.5cm]
        \item Characters with no subscript, e.g. $i, j, k$, are fixed.
        \item Characters with a subscript, e.g. $n_1, n_2$, are meant to be considered for all values within the range.
    \end{itemize}

We will now define a new type of graph:
\begin{definition}
    Let $m, n, x, y \in \mathbb N, k, l \in \mathbb N \cup \{0\}.$ Define the graph $\Gamma(m, n, k, l, x, y)$ as follows: 
    \begin{center}
\begin{tikzpicture}[
    scale=0.8,
    dot/.style={
        circle, 
        minimum size=7mm, 
        inner sep=0.5pt, 
        font=\tiny, 
        text=white,
        thick
    }
]

    \definecolor{green!80!black}{RGB}{0, 100, 0}

    \coordinate (a1) at (30:2.4);
    \coordinate (e1) at (90:2.4);
    \coordinate (c1) at (150:2.4);
    \coordinate (d1) at (210:2.4);
    \coordinate (f1) at (270:2.4);
    \coordinate (b1) at (330:2.4);

    \draw[thick] (a1) -- (e1) -- (c1) -- (d1) -- (f1) -- (b1) -- cycle;

    \node[dot, fill=blue] at (a1) {$a_{n_1}$};
    \node[dot, fill=green!80!black] at (e1) {$e_{n_2}$};
    \node[dot, fill=red] at (c1) {$c_{n_3}$};
    \node[dot, fill=blue] at (d1) {$d_{n_4}$};
    \node[dot, fill=green!80!black] at (f1) {$f_{n_5}$};
    \node[dot, fill=red] at (b1) {$b_{n_6}$};

\end{tikzpicture}
         
    \end{center}
    Let $V_e^r, V_f^r, V_e^b, V_f^b, V_e$, $V_f$ be disjoint sets of vertices that satisfies the following conditions:
    \begin{enumerate}[leftmargin=0.5cm]
        \item $V_e,$ $V_f$ are sets of all the green vertices such that $|V_e| = x, |V_f| = y$. These elements are labeled $e_i$ and $f_i$, respectively.
        \item $V_e^r$ is the set of red vertices that satisfies: 
        \begin{itemize}[leftmargin=0.5cm]
            \item $|V_e^r| = k$
            \item Each element is connected to:
            \begin{itemize}[leftmargin=0.5cm] 
                \item all elements in $V_e$ 
                \item all elements in $V_f^b$
            \end{itemize}
            \item These vertices will be labeled $c_i$
        \end{itemize} 
        \item $V_e^b$ is the set of blue vertices that satisfies: 
        \begin{itemize}[leftmargin=0.5cm]
            \item $|V_e^b| = n$
            \item Each element is connected to:
            \begin{itemize}[leftmargin=0.5cm]
                \item all elements in $V_e$
                \item all elements in $V_f^r$
            \end{itemize}
            \item These vertices will be labeled $a_i$
        \end{itemize}
        \item $V_f^r$ is the set of red vertices that satisfies: 
        \begin{itemize}[leftmargin=0.5cm]
            \item $|V_f^r| = m$
            \item Each element is connected to:
            \begin{itemize}[leftmargin=0.5cm]
                \item all elements in $V_f$
                \item all elements in $V_e^b$
            \end{itemize}
            \item These vertices will be labeled $b_i$
        \end{itemize}
        \item $V_f^b$ is the set of blue vertices that satisfies: 
        \begin{itemize}[leftmargin=0.5cm]
            \item $|V_f^b| = l$
            \item Each element is connected to:
            \begin{itemize}[leftmargin=0.5cm]
                \item all elements in $V_f$
                \item all elements in $V_e^r$
            \end{itemize}
            \item These vertices will be labeled $d_i$
        \end{itemize}
    \end{enumerate}
\end{definition}
\begin{lemma}\label{Gamma-bipartite}
    $\Gamma(m, n, k, l, x, y)$ is bipartite.
\end{lemma}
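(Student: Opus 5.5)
We exhibit an explicit bipartition read off from the edge structure in the definition of $\Gamma(m,n,k,l,x,y)$. The graph has six vertex classes $V_e, V_f, V_e^r, V_e^b, V_f^r, V_f^b$. By the definition, every edge runs between one of the following pairs of classes:
\[ V_e^r\!-\!V_e,\quad V_e^r\!-\!V_f^b,\quad V_e^b\!-\!V_e,\quad V_e^b\!-\!V_f^r,\quad V_f^r\!-\!V_f,\quad V_f^b\!-\!V_f. \]
This also matches the hexagon drawn in the definition, $a\!-\!e\!-\!c\!-\!d\!-\!f\!-\!b\!-\!a$. We read the definition as stating the complete list of edges, which is how the construction is meant; in particular the green sets $V_e,V_f$ have no edges among themselves or to each other.

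Take
\[ X = V_e \cup V_f^r \cup V_f^b, \qquad Y = V_f \cup V_e^r \cup V_e^b . \]
These are disjoint and cover all vertices, since the six sets are disjoint by hypothesis. We then check each of the six edge types above.
\begin{itemize}
\item $V_e^r$–$V_e$ and $V_e^b$–$V_e$: these run from $Y$ to $X$.
\item $V_e^r$–$V_f^b$ and $V_e^b$–$V_f^r$: these also run from $Y$ to $X$.
\item $V_f^r$–$V_f$ and $V_f^b$–$V_f$: these run from $X$ to $Y$.
\end{itemize}
So no edge lies within $X$ or within $Y$, and $(X,Y)$ is a bipartition. Equivalently, the hexagon of types is an even cycle and we $2$-color it alternately: $X$ collects the types $e,b,d$ and $Y$ collects $a,c,f$. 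Degenerate cases with $k=0$ or $l=0$ only delete classes and change nothing.

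The only step needing care is confirming that the definition introduces no edges beyond the six listed types. For example, we must check there is no edge within a single class such as $V_e^r$, and no edge $V_e^r$–$V_e^b$. This holds because each class is specified by exactly whom it is connected to, and the six adjacency lists are consistent with one another and with the hexagon figure.
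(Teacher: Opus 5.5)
Your proof is correct and is essentially the paper's argument. The paper uses the same bipartition, $V_e^r\cup V_e^b\cup V_f$ versus $V_f^r\cup V_f^b\cup V_e$, and likewise reads the definition as giving the complete list of edges; your explicit check of the six edge types only spells out what the paper states as ``by definition.''
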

\begin{proof}
    We will color all the vertices in $V_e^r \cup V_e^b \cup V_f$ blue and $V_f^r \cup V_f^b \cup V_e$ red. By definition of each of the sets, we know that in the set of blue vertices, there is no edge between them. Similarly, there is no edge in the set of red vertices.

    Thus, the graph must be bipartite. See the graph below. 
    \begin{center}
\begin{tikzpicture}[
    scale=0.8,
    dot/.style={
        circle, 
        minimum size=7.5mm, 
        inner sep=0pt, 
        font=\tiny, 
        text=white,
        thick
    }
]

    \definecolor{green!80!black}{RGB}{0, 100, 0}

    \coordinate (a1) at (30:2.4);
    \coordinate (e1) at (90:2.4);
    \coordinate (c1) at (150:2.4);
    \coordinate (d1) at (210:2.4);
    \coordinate (f1) at (270:2.4);
    \coordinate (b1) at (330:2.4);

    \draw[thick] (a1) -- (e1) -- (c1) -- (d1) -- (f1) -- (b1) -- cycle;

    \node[dot, fill=blue] at (a1) {$a_{n_1}$};
    \node[dot, fill=red]  at (e1) {$e_{n_2}$};
    \node[dot, fill=blue] at (c1) {$c_{n_3}$};
    \node[dot, fill=red]  at (d1) {$d_{n_4}$};
    \node[dot, fill=blue] at (f1) {$f_{n_5}$};
    \node[dot, fill=red]  at (b1) {$b_{n_6}$};

\end{tikzpicture}
         
    \end{center}
\end{proof}
\begin{lemma}\label{nonzero}
    Let $\Gamma(m, n, 0, l, x, y)$ be a subgraph of an MPGC where $l > 0.$ Either: 
    \begin{itemize}[leftmargin=1cm]
        \item there exists another vertex in the MPGC which, when considered with the subgraph, creates a $\Gamma(m, n, 1, l, x, y)$ subgraph of the original MPGC, 
        \item or each edge in the subgraph is a part of a 5-cycle.
    \end{itemize}
\end{lemma}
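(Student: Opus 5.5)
The plan is to build the missing red vertex $c$ directly out of blocking vertices. Whenever we cannot, we will produce either an explicit 5-cycle or a 7-cycle to which Lemma~\ref{7-cycle} applies.

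\textbf{Setup.} Fix the coloring of the MPGC given by the definition of $\Gamma(m,n,0,l,x,y)$. When $k=0$, the edges of the subgraph are of four types: $(a_p,e_q)$, $(a_p,b_q)$, $(b_p,f_q)$ and $(f_p,d_q)$. So the subgraph is a ``blown-up'' path $e$--$a$--$b$--$f$--$d$. A single 5-cycle of the form $(a_p,e_q,u,d_s,f_t,b_r)$ contains one edge of each type, except that it would need $e_q$ and $d_s$ to be joined through at most one more step. So the whole argument concerns the pairs $(e_q,d_s)$, which are green and blue respectively.

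\textbf{Step 1: edges present.} Suppose $(e_q,d_s)\in E_{\overline\Gamma}$. Then for any $p,r,t$ the cycle $(a_p,e_q,d_s,f_t,b_r)$ is a 5-cycle. It covers the specific edges $(a_p,e_q)$, $(a_p,b_r)$, $(b_r,f_t)$ and $(f_t,d_s)$. Hence an edge of the subgraph is certainly covered as soon as one of its endpoints' ``$e$-side'' or ``$d$-side'' partner pair is adjacent. This is the bookkeeping I will use to discharge edges.

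\textbf{Step 2: edges absent.} For a nonadjacent pair $(e_q,d_s)$, the maximality condition gives a blocker $h$ adjacent to both. Since $e_q$ is green and $d_s$ is blue, $h$ is red. We rule out $h=b_r$, since $(a,b_r,e_q)$ would be a triangle. So $h$ is a new red vertex, and $(e_q,h,d_s,f_t,b_r,a_p)$ is a 6-cycle. Take $c$ to be such a blocker for one fixed pair, say $(e_1,d_1)$.

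\textbf{Step 3: extending $c$ to all of $V_e\cup V_f^b$.} We want $c$ adjacent to every $e_q$ and every $d_s$; then $c$ together with the subgraph gives $\Gamma(m,n,1,l,x,y)$. (Adjacency of $c$ to the $a$'s, $b$'s and $f$'s is not required by the definition, and $c$ cannot be adjacent to them anyway, by triangle-freeness and its color.)

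Suppose instead that $(c,e_q)$ is missing for some $q$. A blue vertex $w$ blocks it, adjacent to $c$ and $e_q$. Then
\[(c,w,e_q,a_p,b_r,f_t,d_1)\]
is a 7-cycle with colors R,B,G,B,R,G,B, so each color appears at least twice. Lemma~\ref{7-cycle} then puts each of its edges, in particular $(e_q,a_p)$, $(a_p,b_r)$, $(b_r,f_t)$ and $(f_t,d_1)$, in a 5-cycle. Symmetrically, if $(c,d_s)$ is missing, a green blocker $w'$ of that pair yields the 7-cycle $(c,e_1,a_p,b_r,f_t,d_s,w')$. Its colors are R,G,B,R,G,B,G, and again Lemma~\ref{7-cycle} applies.

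\textbf{Step 4: case split and the main obstacle.} We obtain the dichotomy of the lemma as follows. Either the single vertex $c$ works for all $q,s$, which gives the first alternative. Or, for every failing pair, the 7-cycles from Step 3, together with Step 1, cover the edges through the corresponding vertices.

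The main obstacle is this global bookkeeping. A failure at one $e_q$ only covers edges through $e_q$ and some chosen $a_p,b_r,f_t,d_1$. To cover every edge of the subgraph, I will vary $p,r,t$ freely in the 7-cycles, since these indices are unconstrained. For the remaining vertices $e_{q'}$ that \emph{are} adjacent to $c$, I will use the 6-cycle structure: the cycle $(c,e_{q'},a_p,b_r,f_t,d_s)$, combined with the blocker $w$, should again give a 7-cycle or a 5-cycle. I also need to check carefully the degenerate cases where a blocker coincides with an existing vertex, e.g.\ $w=a_p$ or $w=d_s$. Each such coincidence either creates a triangle or directly yields a 5-cycle, in the same style as the proof of Lemma~\ref{7-cycle}.
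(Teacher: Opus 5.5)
Your strategy is the paper's: take a red blocker $c$ of a non-adjacent pair $(e_1,d_1)$, which must be a new vertex. Then a blocker of a missing edge $(c,v)$ gives a 7-cycle in which every color occurs at least twice, so Lemma~\ref{7-cycle} applies. Steps 1--3 are correct; the color counts check out, and the coincidences you list are indeed harmless. The gap is Step 4, which you name as the main obstacle but do not carry out. The edges it must cover are exactly those at vertices that \emph{are} adjacent to $c$. Concretely, when $w$ blocks $(c,e_q)$, three things are missing:
(i) For $e_{q'}$ adjacent to $c$, the edges $(a_p,e_{q'})$ lie on none of your cycles. Your suggested 6-cycle $(c,e_{q'},a_p,b_r,f_t,d_s)$ cannot be finished with Lemma~\ref{6-cycle}: that lemma rests on Lemma~\ref{4-cycle} and Lemma~\ref{counterexample}, and the latter uses the very lemma you are proving.
(ii) Varying $p,r,t$ never reaches the edges $(f_t,d_s)$ with $s\neq 1$ and $d_s$ adjacent to $c$. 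Step 1 does not help when no $e$ is adjacent to $d_s$.
(iii) If $w=d_1$, your 7-cycle is not a cycle at all.

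Each of these closes in one line. For (i), use the 5-cycle $(c,w,e_q,a_p,e_{q'})$, which is the paper's $(c,g,e_m,a_{n_1},e_j)$. For (ii) and (iii), replace $d_1$ by any $d_s$ adjacent to $c$. Then $(c,w,e_q,a_p,b_r,f_t,d_s)$ is again a 7-cycle, with colors R, B, G, B, R, G, B. It degenerates only when $w=d_s$, and then $(e_q,d_s)$ is an edge, so Step 1 covers $(a_p,e_q)$, $(a_p,b_r)$, $(b_r,f_t)$, $(f_t,d_s)$. The mirror case is every $e_q$ adjacent to $c$ and a green $w'$ blocking $(c,d_s)$. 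There, use the 5-cycle $(c,w',d_s,f_t,d_{s'})$ for each $d_{s'}$ adjacent to $c$. Also use your 7-cycle $(c,e_q,a_p,b_r,f_t,d_s,w')$ with $e_1$ replaced by an arbitrary $e_q$, falling back on Step 1 when $w'=e_q$. With these additions your argument is complete. Treating all blockers uniformly through the 7-cycle is then slightly tidier than the paper, which first disposes separately of the case where a blocker of $(c,d_k)$ is some $e_l$.
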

\begin{proof}
    We will do this in a few steps:
    \begin{enumerate}
        \item show that there exists a vertex $c$ connected to both a vertex $d_i$ in $V_f^b$ and a vertex $e_j$ in $V_e$,
        \item show that if there exists $v\in V_e\cup V_f^b$ and $(c,v)$ is not an edge, then every edge in the subgraph is contained in a 5-cycle,
        \item and finish. 
    \end{enumerate}
    If $(d_{n_1}, e_{n_2})$ is in the MPGC, then each edge is contained in the 5-cycle $(d_{n_1}, e_{n_2}, a_{n_3}, b_{n_4}, f_{n_5}),$ which finishes the problem. Thus, there must exist indices $i, j$ such that a red vertex blocks the edge $(d_i, e_j)$. Let this vertex be $c.$ Note that $c\notin V_f^r$ due to triangle-freeness, so $c$ is a new vertex outside of our subgraph but in our MPGC. 
    
    Suppose that the edge $(c,d_k)$ does not exist for some index $k\neq i$. Let $g_k$ be the green vertex that blocks the edge. We will prove that if $g_k$ is not a new vertex, then each edge in the subgraph is in a 5-cycle.

    If $g_k = e_l$, we have $(e_l, d_k, f_{n_2}, b_{n_3}, a_{n_4})$ which covers all edges but $(d_{n_1}, f_{n_2})$ for $n_1\neq k$ and $(e_{n_1}, a_{n_2})$ for $n_1\neq l$. Note that $j$ and $l$ are not necessarily distinct.  
    
    \begin{center}
\begin{tikzpicture}[
    scale=0.7,
    dot/.style={
        circle, 
        minimum size=7.5mm, 
        inner sep=0pt, 
        font=\tiny, 
        text=white,
        thick
    }
]

    \definecolor{green!80!black}{RGB}{0, 100, 0}

    \coordinate (a1) at (30:2.4);
    
    \coordinate (e1) at (90:1.6);
    
    \coordinate (c1) at (150:2.4);
    
    \coordinate (d1_base) at ($(210:2.4) + (0.8,0)$); 
    
    \coordinate (f1) at (270:2.4);
    \coordinate (b1) at (330:2.4);

    \coordinate (el) at ($(e1) + (0,2.5)$);
    \coordinate (ej) at ($(e1) + (0,4)$);
    \coordinate (dk) at (d1_base); 
    \coordinate (d1) at ($(d1_base) + (-2,0)$);
    \coordinate (di) at ($(d1_base) + (-4.5,0)$);

    \draw[thick] (e1) -- (a1) -- (b1) -- (f1) -- (d1);
    \draw[thick] (el) -- (a1) -- (ej);
    \draw[thick] (di) -- (f1) -- (dk);
    \draw[thick] (di) -- (c1) -- (ej);
    \draw[thick] (dk) -- (el) -- (c1);

    \node[dot, fill=blue] at (a1) {$a_{n_1}$};
    \node[dot, fill=green!80!black] at (e1) {$e_{n_2}$};
    \node[dot, fill=blue] at (d1) {$d_{n_4}$};
    \node[dot, fill=green!80!black] at (f1) {$f_{n_5}$};
    \node[dot, fill=red] at (b1) {$b_{n_6}$};
    \node[dot, fill=red] at (c1) {$c$};

    \node[dot, fill=blue] at (di) {$d_{i}$};
    \node[dot, fill=blue] at (dk) {$d_{k}$};

    \node[dot, fill=green!80!black] at (el) {$e_l$};
    \node[dot, fill=green!80!black] at (ej) {$e_j$};

\end{tikzpicture}
    \end{center}
    We prove that $(d_{m}, f_{n_1})$ is always contained in a 5-cycle via construction for some $m\neq k$. We do casework on if $(c, d_m)$ exists:
    \begin{itemize}
        \item If it exists, we have $(c, d_m, f_{n_1}, d_k, e_l).$
        \item If it does not exist, let the green vertex that blocks it be $g.$ If $g=e_{l'}$ for some $l',$ then we have $(d_m, e_{l'}, a_{n_1}, b_{n_2}, f_{n_3}).$ If $g = f_{l'}, $ we have a triangle $(c, f_{l'}, d_i).$ For the case when $g$ is a new point, we have $(c, g, d_{m}, f_{n_1}, d_i).$ 
    \end{itemize}
    Now, we prove that $(e_{m}, a_{n_1})$ is also contained in a 5-cycle for some $m\neq l$. We do casework on if $(c, e_m)$ exists:
    \begin{itemize}
        \item If it exists, we have $(c,e_m,a_{n_1},b_{n_2},f_{n_3},d_k,e_l)$. 
        \item If it does not exist, let the blue vertex that blocks it be $g.$ If $g = a_{l'}$ for some $l',$ then we have the triangle $(c,e_j,a_{l'}).$ We do not care if $g = d_{l'}$ for some $l'$ because the 5-cycle $(c,g,e_m,a_{n_1},e_l)$ works regardless.
    \end{itemize}
    Thus, if $g_k = e_l$ for some $l,$ every edge of the subgraph is contained in a 5-cycle. 

    If $g_k = f_l$ for some $l,$ we have the triangle $(c, f_l, d_i)$.

    Hence, suppose $g_k$ is distinct from vertices in our subgraph for every non-existing $(c,d_k)$ edge. Since $e$ and $d$ are symmetrical, we may assume that for every vertex that blocks the edge $(c, e_k)$ is not in our subgraph. 

    Suppose there existed a vertex $v\in V_{e}\cup V_f^b$ such that $(c,v)$ was not connected. 
    
    Then, if a vertex $u$ blocks the edge, by the above, we have $u$ is a new vertex. Note that due to the symmetry, we may assume $v\in V_f^b.$ Hence, $v$ is blue, $u$ is green. Observe $(c,u,v,f_{n_1}, b_{n_2}, a_{n_3}, e_j)$ which covers all edges but $(e_{n_1}, a_{n_2})$ for $n_1\neq j$ and $(d_{n_1}, f_{n_2})$. The proof is the same as above, but we state it for completeness. 
    
    We prove that $(d_{m}, f_{n_1})$ is always contained in a 5-cycle via construction for some $m\neq k$. We do casework on if $(c, d_m)$ exists:
    \begin{itemize}
        \item If it exists, we have $(c, d_m, f_{n_1}, v, u).$
        \item If it does not exist, let the green vertex that blocks it be $g.$ If $g=e_{l'}$ for some $l',$ then we have $(d_m, e_{l'}, a_{n_1}, b_{n_2}, f_{n_3}).$ If $g = f_{l'}, $ we have a triangle $(c, f_{l'}, d_i).$ For the case when $g$ is a new point, we have $(c, g, d_{m}, f_{n_2}, d_i).$ 
    \end{itemize}
    Now, we prove that $(e_{m}, a_{n_1})$ is also contained in a 5-cycle. We do casework on if $(c, e_m)$ exists for some $m\neq l$:
    \begin{itemize}
        \item If it exists, we have $(c,e_m,a_{n_1},b_{n_2},f_{n_3},v,u)$. 
        \item If it does not exist, let the blue vertex that blocks it be $g.$ If $g = a_{l'}$ for some $l',$ then we have the triangle $(c,e_j,a_{l'}).$ If $g = d_{l'}$ for some $l'$ or if $g$ is a new vertex, we have the 5-cycle $(c,g,e_m,a_{n_1},e_j)$.
    \end{itemize}
    Thus, each edge is contained in a 5-cycle if $v$ exists. If $v$ does not exist, then due to minimally triangle-freeness condition, $(c,e_{n_1})$ and $(c,d_{n_1})$ are in the graph. It follows that considering $c$ as a part of our subgraph, we get a $\Gamma(m,n,1,l,x,y)$ graph. 
\end{proof}
\begin{lemma}\label{counterexample}
    Let $\Gamma$ be an MPGC with a $\Gamma(m, n, k, l, x, y)$ subgraph. Then, each edge in the $\Gamma(m, n, k, l, x, y)$ subgraph is contained in a 5-cycle.
\end{lemma}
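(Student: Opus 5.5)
The plan is an extremal (inductive) argument on the size of the embedded $\Gamma(m,n,k,l,x,y)$, driven by Lemma~\ref{nonzero} and its symmetric analogues. The MPGC $\Gamma$ is finite, so among all subgraphs of $\Gamma$ of the form $\Gamma(m',n',k',l',x',y')$ that contain the given subgraph, I would pick one that is maximal with respect to inclusion. I will show that every edge of this maximal subgraph lies in a 5-cycle. Since the original subgraph is contained in it, this suffices. So assume from now on that the given $\Gamma(m,n,k,l,x,y)$ admits no enlargement to a $\Gamma$-subgraph with one of $k,l,m,n$ larger.

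\textbf{Base case, $k=0$.} First suppose $k=0$ while $l>0$. Lemma~\ref{nonzero} says either every edge lies in a 5-cycle, or a new vertex produces a $\Gamma(m,n,1,l,x,y)$ subgraph. The latter contradicts maximality, so we are done. If both $k=l=0$, the six parts no longer form the hexagon, and I would rename: the graph is then a union of the complete bipartite pieces between $V_e$ and $V_e^b$, between $V_e^b$ and $V_f^r$, and between $V_f^r$ and $V_f$. I would handle this by the same blocking argument as in Lemma~\ref{nonzero}. A missing edge $(a_i,f_j)$ must be blocked, or else adding it keeps the graph triangle-free and (by Lemma~\ref{Gamma-bipartite} plus the coloring) 3-colorable, contradicting maximality. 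Any resulting new vertex either directly closes 5-cycles through the path $e$–$a$–$b$–$f$, or plays the role of a $d$ or $c$ vertex, which again contradicts maximality.

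\textbf{Case $k,l>0$.} Now every part is nonempty. Consider $(d_{n_1},e_{n_2})$. If it is an edge, then $(d_{n_1},e_{n_2},a_{n_3},b_{n_4},f_{n_5})$ is a 5-cycle and covers everything, as in Lemma~\ref{nonzero}. Otherwise it is blocked by a red vertex $c'$. By triangle-freeness $c'\notin V_f^r$. If $c'\in V_e^r$, then $c'$ is adjacent to $d_{n_1}$ and $e_{n_2}$, and I would look directly for 5-cycles of the form $(c_{\ast},e_{\ast},a_{\ast},b_{\ast},f_{\ast})$-type closures. Here I would use the hexagon structure: the 6-cycle $a$–$e$–$c$–$d$–$f$–$b$ plus any chord-blocking vertex tends to produce a 7-cycle or 5-cycle. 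When a 7-cycle arises with all three colors appearing at least twice, Lemma~\ref{7-cycle} finishes it. If $c'$ is new, then I would repeat the step-by-step argument of Lemma~\ref{nonzero}: either $c'$ is adjacent to all of $V_e\cup V_f^b$, so it enlarges $V_e^r$ and contradicts maximality, or some non-adjacency is blocked and yields explicit 5-cycles covering all edges. The symmetric roles (swapping $e\leftrightarrow f$, $a\leftrightarrow d$, $b\leftrightarrow c$, and the red/blue swap) let me reduce every remaining chord to this one.

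\textbf{Main obstacle.} The hardest step is the case where a blocker is an \emph{existing} vertex of the subgraph, or where several blockers coincide. There I have to check by hand that every coincidence either creates a triangle or creates the needed 5-cycle or a qualifying 7-cycle. I would first try to route everything through Lemma~\ref{7-cycle}, by exhibiting 7-cycles such as $(c,u,v,f,b,a,e)$ with a good color distribution. This is the same device that underlies the 7-cycle covers used in Lemma~\ref{nonzero}.
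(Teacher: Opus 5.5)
\textbf{There is a genuine gap.} Your extremal set-up, passing to a maximal $\Gamma(\cdot)$-subgraph containing the given one, is sound. It is equivalent to the paper's finiteness argument. Maximality must also be taken with respect to $x$ and $y$, though, because an outside green vertex can enlarge $V_e$ or $V_f$. Your reduction of the case where exactly one of $k,l$ is zero to Lemma~\ref{nonzero} also matches the paper.

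The gap is the mechanism that is supposed to produce new vertices. Blocking a chord forces a new vertex only when the two endpoints have no common neighbour inside the subgraph, and that happens only in the situation of Lemma~\ref{nonzero}. If $k,l>0$, every non-adjacent pair of differently coloured vertices of $\Gamma(m,n,k,l,x,y)$ already has a common neighbour in it:
\begin{itemize}
\item $(a,c)$ via $e$ and $(a,f)$ via $b$;
\item $(b,d)$ via $f$ and $(b,e)$ via $a$;
\item $(c,f)$ via $d$ and $(d,e)$ via $c$.
\end{itemize}
In particular $(d_{n_1},e_{n_2})$ can never be an edge, since it would form a triangle with any $c_i$. It is always blocked by an existing $c_i$. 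So your case ``$c'$ is new'' is never forced, and your case ``$c'\in V_e^r$'' carries no information. Likewise, for $k=l=0$ the pair $(a_i,f_j)$ is blocked by every $b\in V_f^r$. In both of your main cases no new vertex appears, and the appeal to the hexagon ``plus any chord-blocking vertex'' has nothing to act on. Indeed, by Lemma~\ref{Gamma-bipartite} the subgraph contains no odd cycle at all. Every 5-cycle you need must therefore leave it, and an argument that only inspects chords of the subgraph cannot succeed.

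\textbf{The missing idea} is the one the paper's proof is built on. Because $\Gamma(m,n,k,l,x,y)$ is bipartite, it is not the whole MPGC. MPGCs contain 5-cycles by \cite{gruber}; more directly, a bipartite MPGC would have to be complete bipartite, and its complement would then be disconnected. So by Theorem~\ref{connected} there is a vertex $c$ outside the subgraph that is adjacent to it.

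One then does casework on the colour of $c$ (blue, with red symmetric, and green) and on which vertices of the parts of the other two colours it is adjacent to. Each missing edge from $c$ to such a vertex is blocked. The blocker is either an existing vertex, which creates a triangle or directly closes the needed cycles, or a new vertex. A new blocker yields explicit 5-cycles, or 7-cycles in which every colour occurs at least twice so that Lemma~\ref{7-cycle} applies, covering all edges. The only configurations not closed off this way are those in which $c$ can be added to one of the six parts, contradicting maximality.

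This outside-vertex analysis is the whole content of the lemma beyond Lemma~\ref{nonzero}, and your proposal does not contain it.
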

\begin{proof}
    We will do this in a few steps:
    \begin{enumerate}
        \item reduce the case to either $|V_e^r| = |V_f^b| = 0$ or both set's cardinality is non-zero,
        \item show that there must exist another vertex, $c$, connected to the $\Gamma(m, n, k, l, x, y)$ subgraph,
        \item we will do casework on which vertex $c$ is connected to show that either each edge in the $\Gamma(m, n, k, l, x, y)$ subgraph is contained in a 5-cycle, or else when considering $c,$ our subgraph becomes $\Gamma(m', n', k', l', x', y')$,
        \item and then finish. 
    \end{enumerate}
    We can assume that either $|V_e^r| = |V_f^b| = 0$ or both set's cardinality is non-zero. We can do this since Lemma \ref{nonzero} handles the case in which only one of the set's cardinalities is 0. Thus, we will also write 5 or 7-cycles that do not contain $c_{n_1}, d_{n_2}$ in the case that the sets are empty.

    By \cite{gruber} and Lemma \ref{Gamma-bipartite}, it follows that $\Gamma(m, n, k, l, x, y)$ can never be an MPG. Clearly, no edge can be added to $\Gamma(m, n, k, l, x, y)$ without violating one of the MPG conditions. Hence, there must exist another vertex. We will show that adding this vertex $\Gamma(m, n, k, l, x, y)$ will result in:
    \begin{itemize}
        \item another $\Gamma(m', n', k', l', x', y')$ graph with $\bullet'\geq \bullet$ and $\sum\bullet' > \sum \bullet$ where $\bullet$ are the inputs,
        \item or each edge being in a 5-cycle.
    \end{itemize}
    This will prove our desired result since MPGCs are finite graphs, so we can not add in infinitely many vertices. 

    Let our other vertex be $c.$ By Lemma \ref{connected}, we can assume that $c$ is connected to this subgraph. 
    \begin{lemma}
        If $c$ is blue, then each edge is contained in a 5-cycle, or it will produce another $\Gamma(m, n, k, l, x, y)$ graph. 
    \end{lemma}
    \begin{proof}
        Define $V_c, V_{nc}$ such that $V_c\cup V_{nc} = V_f^b$ and $V_c$ consists of the vertices connected to $c$ while $V_{nc}$ consists of the vertices without an edge to $c$. We first handle the case where $V_c, V_{nc}\neq \emptyset.$ 
        
        Suppose $b_i\in V_c, b_j\in V_{nc}.$ Then, define $v$ as the green vertex that blocks $(b_j,c)$. If $v\notin V_f$, $(c,b_i,v)$ is a triangle and if $v\in V_e,$ $(v,a_{n_1},b_j)$ is a triangle. Thus, $v$ is a new vertex. Observe the 7-cycles $(c,v,b_j,f_{n_1},d_{n_2},c_{n_3},e_{n_4})$ and $(c,v,b_j,f_{n_1},b_i,a_{n_2},e_{n_3})$ which covers all of the edges. Thus, if $V_c, V_{nc}\neq\emptyset,$ then each edge is covered in a 5-cycle. 
        \begin{center}
\begin{tikzpicture}[
    scale=0.8,
    dot/.style={
        circle, 
        minimum size=7.5mm, 
        inner sep=0pt, 
        font=\tiny, 
        text=white,
        thick
    }
]

    \definecolor{green!80!black}{RGB}{0, 100, 0}

    \coordinate (a1) at (30:2.4);
    \coordinate (e1) at (90:2.4);
    \coordinate (c1) at (150:2.4);
    \coordinate (d1) at (210:2.4);
    \coordinate (f1) at (270:2.4);
    \coordinate (bi) at (330:2.4);

    \begin{scope}[shift={(bi)}]
        \coordinate (bj) at (2,0);
    \end{scope}
    
    \begin{scope}[shift={(a1)}]
        \coordinate (c) at (2,0);
    \end{scope}
    
    \coordinate (v) at (5,0);

    \draw[thick] (e1) -- (c) -- (v) -- (bj) -- (f1) -- (d1) -- (c1) -- (e1);
    
    \draw[thick] (e1) -- (a1) -- (bi) -- (f1);
    \draw[thick] (bi) -- (c);
    \draw[thick] (a1) -- (bj);

    \node[dot, fill=blue] at (a1) {$a_{n_1}$};
    \node[dot, fill=red] at (c1) {$c_{n_2}$};
    \node[dot, fill=blue] at (d1) {$d_{n_3}$};
    \node[dot, fill=green!80!black] at (f1) {$f_{n_4}$};
    \node[dot, fill=green!80!black] at (e1) {$e_{n_6}$};
    \node[dot, fill=red] at (bi) {$b_i$};
    \node[dot, fill=red] at (bj) {$b_j$};
    \node[dot, fill=blue] at (c) {$c$};
    \node[dot, fill=green!80!black] at (v) {$v$};

\end{tikzpicture}
        \end{center}
        
        Now, consider if $V_{nc} = \emptyset,$ and thus $c$ is connected to every vertex in $V_f^r.$ If $c$ was connected to every vertex in $V_e,$ we get $\Gamma(m+1,n,k,l,x,y)$. Hence, suppose $c$ is not connected to all vertices in $V_f$. 
        
        Suppose $(c,e_i)$ is an edge and $(c,e_j)$ is not an edge. Note that $e_j$ necessarily exists but $e_i$ does not have to necessarily exist. Let $(c,e_j)$ be blocked by $v$. We know $v$ is red. If $v \in V_e^r$, then $(v,e_i,c)$ is a triangle. If $v\in V_f^r,$ then $(v,e_j,a_{n_1})$ is a triangle. Thus, $v$ is a new vertex. Notice $(v,e_j,c_{n_1},d_{n_2},f_{n_3},b_{n_4},c)$ and $(c,v,e_j,c_{n_1},e_i)$ which covers all of the edges connected to at least one of the vertices $d_{n_1},c_{n_2}.$ We also have $(c,v,e_j,a_{n_1},b_{n_2})$ and $(c,v,e_j,a_{n_1},e_i)$ which covers all edges but $(b_{n_1},f_{n_2})$.
        \begin{center}
\begin{tikzpicture}[
    scale=0.8,
    dot/.style={
        circle, 
        minimum size=7.5mm, 
        inner sep=0pt, 
        font=\tiny, 
        text=white,
        thick
    }
]

    \definecolor{green!80!black}{RGB}{0, 100, 0}

    \coordinate (a1) at (30:2.4);
    \coordinate (c1) at (150:2.4);
    \coordinate (d1) at (210:2.4);
    \coordinate (f1) at (270:2.4);
    \coordinate (b1) at (330:2.4);

    \coordinate (ei) at (90:2.4);
    
    \begin{scope}[shift={(ei)}]
        \coordinate (ej) at (0,1);
    \end{scope}

    \coordinate (c) at (30:4.8);

    \begin{scope}[shift={(f1)}]
        \coordinate (u) at (3.5,0);
    \end{scope}

    \begin{scope}[shift={(a1)}]
        \coordinate (v) at (0,2);
    \end{scope}

    \draw[thick] (a1) -- (b1) -- (f1) -- (d1) -- (c1);
    \draw[thick] (a1) -- (ei) -- (c1);
    \draw[thick] (a1) -- (ej) -- (c1);
    \draw[thick] (ej) -- (v) -- (c);
    \draw[thick] (ei) -- (c) -- (b1);
    
    \draw[thick, dashed] (v) -- (u) -- (f1) -- cycle;

    \node[dot, fill=blue] at (a1) {$a_{n_1}$};
    \node[dot, fill=red] at (c1) {$c_{n_2}$};
    \node[dot, fill=blue] at (d1) {$d_{n_3}$};
    \node[dot, fill=green!80!black] at (f1) {$f_{n_4}$};
    \node[dot, fill=red] at (b1) {$b_{n_5}$};
    \node[dot, fill=green!80!black] at (ei) {$e_i$};
    \node[dot, fill=green!80!black] at (ej) {$e_j$};
    \node[dot, fill=blue] at (c) {$c$};
    \node[dot, fill=blue] at (u) {$u$};
    \node[dot, fill=red] at (v) {$v$};

\end{tikzpicture}
        \end{center}
        Consider $(f_k,v)$:
        \begin{itemize}
            \item If $(f_k,v)$ is connected, take $(f_k,b_{n_1},a_{n_2},e_j,v)$.
            \item If $(f_k,v)$ is not connected, let $u$ block it. If $u = c$, then $(c,b_{n_1},f_{k})$ is a triangle. If $u \in V_e^b,$ then $(u,f_{k},b_{n_1})$ is a triangle. Otherwise, take $(u,f_{k},b_{n_1},c,v)$.
        \end{itemize}
        Thus, if $V_{nc}=\emptyset,$ then our conclusion holds. Finally, we only have to check when $c$ is not connected to $b_{n_1}$. By symmetry, we may assume $c$ is not connected to $c_{n_1}$ as well. Hence, suppose $c$ is connected to at least one of the vertices in $V_f$ or $V_e.$ 
        
        Suppose $c$ is not connected to any vertices in $V_f.$ Hence, suppose $(c,e_k)$ is an edge. Let $v_i$ block $(c,b_i)$ for all $i$. Note that $v_i$ is green. If $v_i\in V_e,$ then $(v_i,a_{n_1},b_i)$ is a triangle. If $v_i\in V_f,$ then this contradicts our assumption. Thus, $v_i$ are new, not necessarily distinct, vertices. Let $u_i$ block $(c,f_i)$ for all $i$. Note that $u_i$ is red. If $u_i\in V_f^r,$ and $u_i = b_{j}$ for some $j,$ then $(b_j,c,v_j)$ is a triangle. If $u_i\in V_e^r,$ then $(f_i,d_{n_1},u_i)$ is a triangle. Hence, $u$ is also a new vertex. 
        \begin{center}
\begin{tikzpicture}[
    scale=0.9,
    dot/.style={
        circle, 
        minimum size=7.5mm, 
        inner sep=0pt, 
        font=\tiny, 
        text=white,
        thick
    }
]

    \definecolor{green!80!black}{RGB}{0, 100, 0}

    \coordinate (a1) at (30:2.6);
    \coordinate (c1) at (150:2.6);
    \coordinate (d1) at (210:2.6);
    \coordinate (f1) at (270:2.6);
    \coordinate (b1) at (330:2.6);
    \coordinate (ei) at (90:2.6);
    \coordinate (c) at (0,0);

    \begin{scope}[shift={(ei)}]
        \coordinate (ej) at (0,1.4);
    \end{scope}

    \begin{scope}[shift={(ej)}]
        \coordinate (w) at (-1,0);
    \end{scope}

    \coordinate (u) at ($1/3*(f1) + 1/3*(c) + 1/3*(d1)$);

    \coordinate (v) at ($1/3*(f1) + 1/3*(b1) + 1/3*(c)$);

    \draw[thick] (a1) -- (b1) -- (f1) -- (d1) -- (c1) -- (ei) -- (a1);
    \draw[thick] (a1) -- (ej) -- (c1);
    \draw[thick] (ej) -- (w) -- (c);
    \draw[thick] (ei) -- (c);
    \draw[thick] (c) -- (v) -- (b1);
    \draw[thick] (c) -- (u) -- (f1);

    \node[dot, fill=blue] at (a1) {$a_{n_1}$};
    \node[dot, fill=red] at (c1) {$c_{n_2}$};
    \node[dot, fill=blue] at (d1) {$d_{n_3}$};
    \node[dot, fill=green!80!black] at (f1) {$f_{n_4}$};
    \node[dot, fill=red] at (b1) {$b_{n_5}$};
    \node[dot, fill=green!80!black] at (ei) {$e_k$};
    \node[dot, fill=green!80!black] at (ej) {$e_i$};
    \node[dot, fill=blue] at (c) {$c$};
    \node[dot, fill=red] at (u) {$u_{n_4}$};
    \node[dot, fill=green!80!black] at (v) {$v_{n_5}$};
    \node[dot, fill=red] at (w) {$w$};

\end{tikzpicture}
        \end{center}
        Now, observe the cycles $(c,u_{n_1},f_{n_1},b_{n_2},v_{n_2})$, $(c,v_{n_1},b_{n_1},a_{n_2},e_k)$, $(e_k, c,v_{n_1},b_{n_1},f_{n_2},d_{n_2},c_{n_3})$. These cycles cover all edges but the vertices in $V_e$ that are not connected to $c.$ Let $w$ block $(e_i,c).$ If $w\in V_e^b,$ then $(e_k,c,w)$ is a triangle. Otherwise, the 5-cycle $(e_j,w,c,e_k,a_{n_1})$ finishes. 

        Now, suppose that $c$ is connected to some but not all vertices in $V_f.$ Suppose that $(f_i,c)$ was connected and $(f_j,c)$ was blocked by a red vertex $v.$ If $v\in V_f^r,$ then $(f_j,d_{n_1},v)$ is a triangle. If $v\in V_e^r,$ then $(v, f_i,b_{n_1})$ is a triangle. Now, consider the edge $(e_k,c)$:
        \begin{center}
\begin{tikzpicture}[
    scale=0.8,
    dot/.style={
        circle, 
        minimum size=7.5mm, 
        inner sep=0pt, 
        font=\tiny, 
        text=white,
        thick
    }
]

    \definecolor{green!80!black}{RGB}{0, 100, 0}

    \coordinate (a1) at (30:2.4);
    \coordinate (c1) at (150:2.4);
    \coordinate (d1) at (210:2.4);
    \coordinate (b1) at (330:2.4);
    \coordinate (e1) at (90:2.4);
    \coordinate (fi) at (270:2.4);
    \coordinate (c) at (0,0);

    \begin{scope}[shift={(fi)}]
        \coordinate (fj) at (0,-1);
    \end{scope}

    \begin{scope}[shift={(fj)}]
        \coordinate (v) at (-2,0);
    \end{scope}

    \begin{scope}[shift={(e1)}]
        \coordinate (u) at (-2, 0.9);
    \end{scope}

    \draw[thick] (a1) -- (b1) -- (fi) -- (d1) -- (c1) -- (e1) -- (a1);
    \draw[thick] (d1) -- (fj) -- (b1);
    \draw[thick] (fj) -- (v) -- (c);
    \draw[thick] (c) -- (fi);

    \draw[thick, dashed] (c) -- (e1) -- (u) -- cycle;

    \node[dot, fill=blue] at (a1) {$a_{n_1}$};
    \node[dot, fill=red] at (c1) {$c_{n_2}$};
    \node[dot, fill=blue] at (d1) {$d_{n_3}$};
    \node[dot, fill=red] at (b1) {$b_{n_5}$};
    \node[dot, fill=green!80!black] at (fi) {$f_{i}$};
    \node[dot, fill=green!80!black] at (fj) {$f_{j}$};
    \node[dot, fill=green!80!black] at (e1) {$e_k$};
    \node[dot, fill=blue] at (c) {$c$};
    \node[dot, fill=red] at (u) {$u$};
    \node[dot, fill=red] at (v) {$v$};

\end{tikzpicture}
        \end{center}
        \begin{itemize}[leftmargin=0.5cm]
            \item If $(e_k,c)$ is connected, take $(c,f_i,b_{n_1},a_{n_2},e_k)$, $(c,f_i,b_{n_1},f_j,v)$, and $(e_k,c,f_i,d_{n_1},c_{n_2})$ cover all of the edges.  
            \item If $(e_k,c)$ is not connected, suppose the red vertex $u$ blocks it. If $u\in V_f^r,$ then $(e_k,a_{n_1},u)$ is a triangle. If $u\in V_e^r$, we do casework on the size of $V_e^r.$ 
            \begin{itemize}[leftmargin=0.5cm]
                \item If $V_e^r \neq \{u\}$ We have $(e_k,u,c,v,f_j,b_{n_1},a_{n_1})$, $(c,v,f_j,b_{n_1},f_i)$, $(c,v,f_j,d_{n_1},e_i)$, $(f_j,v,c,u,d_{n_1})$, and $(e_k,u,c,v,f_j,d_{n_1},c_{n_2})$ cover all of the edges. 
                \item If $V_e^r = \{u\}$, then we do casework on $(e_k, v).$ If it is connected, $(e_k,u,d_{n_1},f_j,v).$ If it is not connected, let the blue vertex $w$ block it. If $w = c$, then $(c,e_k,c_{n_1})$ is a triangle. Otherwise, we have $(c,v,w,e_k,u)$ and $(c,c_{n_1},d_{n_2},f_j,v).$ 
            \end{itemize}
            Now, suppose $u$ is a new vertex. Then, the cycles in the above case when $u\in V_e^r$ finish. 
        \end{itemize}
        This covers all of the edges, so if $c$ is not connected to some edge in $V_f,$ we are done. Hence, suppose $c$ is connected to every edge in $V_f.$ 
        
        Now, suppose the green vertex $v$ blocks $(c,c_i)$ for some index $i.$ If $v\in V_f,$ $(c_i,d_{n_1},v)$ is a triangle. If $v\in V_e,$ then we have $(c,f_{n_1},d_{n_2}, c_{n_3}, v),$ $(c,f_{n_1},d_{n_2}, c_{n_3}, e_{n_4},a_{n_5},v)$, $(c,f_{n_1},b_{n_2}, a_{n_3}, v)$. If $v$ is a new vertex, take $(c,v,c_{i},d_{n_1},f_{n_2})$, $(c,v,c_{i},e_{n_1},a_{n_2},b_{n_3},f_{n_4})$, and $(c,v,c_i,e_{n_1},c_{n_2},d_{n_3},f_{n_4}).$ 

        Hence, suppose that $(c,c_{n_1})$ is an edge. The graph is now $\Gamma(m,n,k,l+1,x,y).$
    \end{proof}
    \begin{lemma}
        If $c$ is red, then each edge is contained in a 5-cycle, or it will produce another $\Gamma(n, m, k, l, x, y)$ graph.  
    \end{lemma}
    \begin{proof}
        It has the same proof as the previous Lemma.
    \end{proof}
    \begin{lemma}
        If $c$ is green, then each edge is contained in a 5-cycle, or it will produce another $\Gamma(n, m, k, l, x, y)$ graph. 
    \end{lemma}
    \begin{proof}
        Without loss of generality, let $c$ be connected to the red vertex $b_1.$ There exist 2 cases:
        \begin{enumerate}[leftmargin=0.5cm]
            \item if there exists $j$ such that $c$ is not connected to $b_j,$  
            \item and if $c$ is connected to each $b_{n_1}.$
        \end{enumerate}
        
        We handle the first case first. Suppose that $(b_i, c)$ are connected and $(b_j,c)$ are blocked by a blue vertex $d$. We know that $b_i, b_j$ necessarily exist. If $d\in V_e^b$,  then $(c,b_{i},d)$ is a triangle. If $d\in V_f^b,$ then $(b_j,d,f_{n_1})$ is a triangle. Hence, $d$ is a new vertex. 

        First note that $(c,d,b_j,a_{n_1}, b_i)$ and $(c,d,b_j,f_{n_1},b_i)$ covers the edges containing the vertex $b_{n_1}.$ We will do casework on if the edge $(e_k,d)$ is connected. 
        \begin{center}
\begin{tikzpicture}[
    scale=0.8,
    dot/.style={
        circle, 
        minimum size=7.5mm, 
        inner sep=0pt, 
        font=\tiny, 
        text=white,
        thick
    }
]

    \definecolor{green!80!black}{RGB}{0, 100, 0}

    \coordinate (a1) at (30:2.4);
    \coordinate (c1) at (150:2.4);
    \coordinate (d1) at (210:2.4);
    \coordinate (e1) at (90:2.4);
    \coordinate (f1) at (270:2.4);
    \coordinate (bi) at (330:2.4);
    \coordinate (v) at (-.5,0);

    \begin{scope}[shift={(bi)}]
        \coordinate (bj) at (2,0);
    \end{scope}

    \begin{scope}[shift={(f1)}]
        \coordinate (c) at (0,-1);
    \end{scope}

    \begin{scope}[shift={(c)}]
        \coordinate (d) at (2,0);
    \end{scope}

    \draw[thick] (bi) -- (f1) -- (d1) -- (c1) -- (e1) -- (a1) -- (bj) -- (f1) -- (bi) -- (a1);
    \draw[thick] (bj) -- (d) -- (c) -- (bi);

    \draw[thick, dashed] (d) -- (e1) -- (v) -- cycle;

    \node[dot, fill=blue] at (a1) {$a_{n_1}$};
    \node[dot, fill=red] at (c1) {$c_{n_2}$};
    \node[dot, fill=blue] at (d1) {$d_{n_3}$};
    \node[dot, fill=green!80!black] at (f1) {$f_{n_4}$};
    \node[dot, fill=green!80!black] at (e1) {$e_k$};
    \node[dot, fill=green!80!black] at (c) {$c$};
    \node[dot, fill=blue] at (d) {$d$};
    \node[dot, fill=red] at (v) {$v$};
    \node[dot, fill=red] at (bi) {$b_{i}$};
    \node[dot, fill=red] at (bj) {$b_{j}$};

\end{tikzpicture}
        \end{center}
        \begin{itemize}
            \item If they are connected, $(c,d,e_k,a_{n_1},b_{i})$ and $(c,d,e_k,c_{n_1},d_{n_2},f_{n_3},b_i)$ finish.
            \item If they are not connected, let the red vertex $v$ block it. If $v\in V_f^r,$ then $(e_k,v,a_{n_1})$ is a triangle. If $v\in V_e^r,$ then $(e_k,v,d,b_j,a_{n_1})$, $(v,d_{n_1},f_{n_2},b_j,d)$, and $(e_k,v,d,b_j,f_{n_1},d_{n_2},c_{n_3})$ finish. If $v$ is a new vertex, then $(e_k,v,d,b_j,a_{n_1})$ and $(d,v,e_k,c_{n_1},d_{n_2},f_{n_3},b_j)$ cover the edges. 
        \end{itemize}
        Hence, in the first case, each edge is contained in a 5-cycle. Now consider the 2nd case. We have that $c$ is connected to all of $b_i.$ 
        
        If $|V_f^b| = 0,$ then we have a $\Gamma(n,m,0,0,x,y+1)$ graph. Hence, suppose $k, l > 0.$ We know that $c$ is connected to every vertex in $V_f^r.$ Suppose $(c,d_i)$ is not an edge, and is blocked by the red vertex $v$. If $v\in V_f^r,$ then $(v,d_i,f_{n_1})$ is a triangle. If $v\in V_e^r$, then $(v,c,b_{n_1},a_{n_2},e_{n_3})$, $(v,c,b_{n_1},f_{n_2},d_{n_3})$, and $(v,c,b_{n_1},a_{n_2},e_{n_3}, c_{n_4}, d_{n_5})$ cover the edges. 
        
        Hence, suppose $v$ is a new vertex. Thus, we have $(d_i, f_{n_1}, b_{n_2}, c, v)$, $(d_i, v, c, b_{n_1}, a_{n_2}, e_{n_3}, c_{n_4})$ which cover all edges but the ones connected to $d_{n_1}$ for $n_1\neq j$. Observe $(d_i,v,c,b_{n_1},f_{n_2},d_{n_3},c_{n_4})$ which covers all of the edges. 
        
        Finally, if $(c,d_{n_1})$ is an edge, then we get the graph $\Gamma(n,m,k,l,x,y+1).$ 
    \end{proof}
   Since adding any colored vertex will result in each edge of the subgraph being in a 5-cycle, or generate a new $\Gamma(m, n, k, l, x, y)$, we are done. 
\end{proof}
We will now prove that each edge in a 4-cycle is part of a 5-cycle. Note that there are only two non-isomorphic 3-colorings of a 4-cycle: 
\begin{center}
\begin{tikzpicture}[
    scale=0.7,
    dot/.style={
        circle, 
        minimum size=6mm, 
        inner sep=0pt, 
        font=\tiny, 
        text=white,
        thick
    }
]

    \definecolor{green!80!black}{RGB}{0, 100, 0}

    \begin{scope}[xshift=-1.5cm]
        \coordinate (A1) at (-0.5, 1);
        \coordinate (B1) at (-0.5, -1);
        \coordinate (D1) at (-2.5, -1);
        \coordinate (C1) at (-2.5, 1);

        \draw[thick] (A1) -- (B1) -- (D1) -- (C1) -- cycle;

        \node[dot, fill=red] at (A1) {$a_1$};
        \node[dot, fill=blue] at (B1) {$a_2$};
        \node[dot, fill=red] at (D1) {$a_3$};
        \node[dot, fill=green!80!black] at (C1) {$a_4$};
    \end{scope}

    \begin{scope}[xshift=1.5cm]
        \coordinate (A2) at (2.5, 1);
        \coordinate (B2) at (2.5, -1);
        \coordinate (D2) at (0.5, -1);
        \coordinate (C2) at (0.5, 1);

        \draw[thick] (A2) -- (B2) -- (D2) -- (C2) -- cycle;

        \node[dot, fill=red] at (A2) {$a_1$};
        \node[dot, fill=blue] at (B2) {$a_2$};
        \node[dot, fill=red] at (D2) {$a_3$};
        \node[dot, fill=blue] at (C2) {$a_4$};
    \end{scope}

\end{tikzpicture}
\end{center}

Define $V_a = (a_1, a_3),$ and $V_b = (a_2, a_4).$ Let $V, E$ be the vertex and edge set of our MPGC.
\begin{lemma} \label{1st4}
    If the first coloring of a 4-cycle is a subgraph of a MPGC, then each edge is part of a 5-cycle. 
\end{lemma}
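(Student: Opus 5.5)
Our approach is to turn the 4-cycle $(a_1,a_2,a_3,a_4)$, with $a_1,a_3$ red, $a_2$ blue and $a_4$ green, into a $\Gamma(m,n,k,l,x,y)$ subgraph and then apply Lemma \ref{counterexample}. Failing that, we produce the 5-cycles directly. The key observation is that $a_2$ and $a_4$ have different colors and are not adjacent, since $(a_1,a_2,a_4)$ would otherwise be a triangle. So by maximality some vertex $h$ blocks $(a_2,a_4)$. The vertex $h$ must be red, because it is adjacent to a blue and a green vertex. It is adjacent to both $a_2$ and $a_4$, exactly like $a_1$ and $a_3$.

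\textbf{Step 1: the vertex $h$ is new.} We cannot have $h\in\{a_1,a_3\}$, since both are already adjacent to $a_2$ and $a_4$, and blocking requires the edge $(a_2,a_4)$ to be absent, which holds. So $h$ could in principle equal $a_1$. We therefore need a second non-edge.

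\textbf{Step 2: the relabelled structure.} Consider the edge $(a_1,a_3)$. It is absent because both endpoints are red, so no blocking argument applies to it. Instead we build outward. Since MPGCs have minimum degree at least $2$ (Theorem \ref{deg2}) and contain induced 5-cycles, we look for an extra vertex. Any vertex adjacent to $a_2$ but not to $a_4$ (or vice versa) yields, through a blocking argument on the missing edge, a path of length $3$ from $a_2$ to $a_4$ avoiding $a_1$ and $a_3$. Concatenating this path with $a_2a_1a_4$ or $a_2a_3a_4$ gives 5-cycles covering all four edges. Here triangle-freeness rules out the degenerate choices in which the blocker coincides with an existing vertex, exactly as in the casework of Lemma \ref{7-cycle}.

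\textbf{Step 3: the remaining case.} Otherwise every neighbor of $a_2$ or $a_4$ is adjacent to both of them. We then identify the configuration with a $\Gamma$-type graph as follows:
\begin{itemize}
\item the red common neighbors of $a_2,a_4$ form $V_f^r$ (the $b$'s);
\item $a_2$ plays the role of an $a$-vertex in $V_e^b$;
\item $a_4$ lies in $V_f$;
\item we recolor in the bipartite sense of Lemma \ref{Gamma-bipartite}.
\end{itemize}
More concretely, the 4-cycle itself is a complete bipartite $K_{2,2}$, so it is a $\Gamma(2,1,0,0,0,1)$-type subgraph once we check that the colors match the definition. If the green/blue assignment does not match, we first use Step 2 to find, via a vertex blocking an edge between a neighbor and $a_2$ or $a_4$, an extra green or blue vertex that completes the pattern. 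Lemma \ref{counterexample} then shows that every edge of that subgraph, and in particular every edge of the 4-cycle, lies in a 5-cycle.

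\textbf{Main obstacle.} The main obstacle is fitting the exact color pattern into the definition of $\Gamma(m,n,k,l,x,y)$, whose edge sets pair the red/blue classes with specific green classes. If the direct embedding fails, we would first try adjoining the blocker $h$ from Step 1, or a blocker of $(a_1,v)$ for a green neighbor $v$ of $a_2$. We would expect to find at most one extra new vertex, producing either an explicit 5-cycle through each edge or a genuine $\Gamma$ subgraph, with triangle-freeness eliminating all coincidences with existing vertices.
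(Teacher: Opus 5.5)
Your proposal has genuine gaps. Step 1 produces nothing: $a_1$ itself blocks $(a_2,a_4)$, as you concede, so no new vertex is obtained. Step 2 is correct for a \emph{red} vertex adjacent to exactly one of $a_2,a_4$. There the non-edge to the other endpoint is bichromatic, the blocker is forced to be new, and you get two 5-cycles. It fails, however, for a green neighbor $v$ of $a_2$ that misses $a_4$, or a blue neighbor $v$ of $a_4$ that misses $a_2$. In that case the missing edge joins two vertices of the same color, so maximality supplies no blocker. Moreover, $v$'s non-edges to $a_1,a_3$ are already blocked by $a_2$ (resp.\ $a_4$). This is exactly the hard case in the paper's proof (a blue vertex pendant at $a_4$), and the paper settles it only by exhibiting a genuine $\Gamma(2,1,0,0,1,1)$ and invoking Lemma~\ref{counterexample}. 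So the hypothesis of your Step 3 does not follow.

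Step 3 is not valid either. The definition of $\Gamma(m,n,k,l,x,y)$ requires $x,y\in\mathbb N$, and the proof of Lemma~\ref{counterexample} repeatedly routes cycles through a vertex of $V_e$. If the bare $K_{2,2}$ counted as $\Gamma(2,1,0,0,0,1)$, this lemma would be a one-line corollary. Nor can you ``recolor in the bipartite sense'': the colors in $\Gamma$ must come from a proper 3-coloring of the whole MPGC, because that is what determines the colors of blockers inside Lemma~\ref{counterexample}.

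The Step 3 situation really occurs, with no extra vertex near $a_2,a_4$. Take a 5-cycle $(v_1,\dots,v_5)$ and add a non-adjacent twin $v_1'$ of $v_1$. Every non-edge closes a triangle, so this is an MPGC. Color $v_1$ blue, $v_2,v_5$ red, $v_3$ green, $v_4$ blue, and $v_1'$ green. Then $(v_2,v_1,v_5,v_1')$ has the first coloring, with $N(a_2)=N(a_4)=\{a_1,a_3\}$. Your argument finds nothing further, yet the required 5-cycles pass through $v_3,v_4$, which hang off $a_1,a_3$ and are never examined.

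The missing idea is to use connectivity of the MPG itself. On the four cycle vertices the MPG splits into $\{a_1,a_3\}$ and $\{a_2,a_4\}$. So, by the corollary the paper cites, some vertex $c$ is MPG-adjacent to a vertex of each pair, and hence in the MPGC $c$ is adjacent to at most one $a_i$. The paper then does casework on the color of $c$ and on which $a_i$ (if any) it touches, using blockers of $c$'s bichromatic non-edges. Each case yields one of three outcomes: explicit 5-cycles, a 7-cycle using each color at least twice (Lemma~\ref{7-cycle}), or a genuine $\Gamma(2,1,0,0,1,1)$ or $\Gamma(2,1,1,1,1,1)$ to which Lemma~\ref{counterexample} applies.
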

\begin{proof}
    Note that the complement is disconnected. Let $c$ be a vertex such that it forms a path between $V_a$ and $V_b.$ $c$ must exist because of \cite[Corollary 4.4]{florez}. 

    We claim that $c$ is connected to at most 1 of the 4 from the original graph. $c$ can not be connected to both vertices in either $V_a$ or $V_b,$ or else it contradicts the fact that $c$ is connected to them in the original graph. $c$ also can not be connected to one from each because it will form a triangle. Hence, from here, we will assume that for every vertex $c$ is connected to, the vertex is not a part of the 4-cycle, unless stated otherwise.
    
    Hence, there are 2 cases, if $c$ is connected to 1 of the vertices, or if $c$ is connected to none. 
    \begin{enumerate}[leftmargin=0.5cm]
        \item If $c$ is connected to 1 vertex, we will do casework on which color vertex it is connected to. 
        \begin{enumerate}[leftmargin=0.5cm]
            \item If $c$ is connected to a red vertex, we first note that $c$ is not red. Without loss of generality, we assume that $(c,a_3)$ is connected. Hence, there must exist another vertex $d$ that will create a triangle if $c, a_1$ is connected. Take $(c,d,a_1,a_4,a_3)$ and $(c,d,a_1,a_2,a_3)$ and we get each edge is covered. 
            \begin{center}
\begin{tikzpicture}[
    scale=0.9,
    dot/.style={
        circle, 
        minimum size=6mm, 
        inner sep=0pt, 
        font=\tiny, 
        text=white,
        thick
    }
]

    \definecolor{green!80!black}{RGB}{0, 100, 0}

    \coordinate (a1) at (1, 1);
    \coordinate (a2) at (1, -1);
    \coordinate (a3) at (-1, -1);
    \coordinate (a4) at (-1, 1);
    
    \coordinate (c) at (0, 2);
    \coordinate (d) at (0.7, 1.7);

    \draw[thick] (a1) -- (a2) -- (a3) -- (a4) -- cycle;
    
    \draw[thick] (a3) -- (c) -- (d) -- (a1);

    \node[dot, fill=red] at (a1) {$a_1$};
    \node[dot, fill=blue] at (a2) {$a_2$};
    \node[dot, fill=red] at (a3) {$a_3$};
    \node[dot, fill=green!80!black] at (a4) {$a_4$};
    
    \node[dot, fill=black] at (c) {$c$};
    \node[dot, fill=blue] at (d) {$d$};

\end{tikzpicture}
            \end{center}
            \item If $c$ is connected to the green/blue, Without loss of generality, let $c$ be connected to the green vertex. There exists 2 cases, if $c$ is red, and if $c$ is blue. 
            \begin{enumerate}[leftmargin=0.5cm]
                \item If $c$ is red, there exists a vertex $c_2$ that is connected to both $a_2$ and $c$ to block the edge $(c, a_2)$(recall $c$ can not be connected to both vertices in $V_b$). Take $(c,a_4,a_3,a_2,c_2)$ and $(c,a_4,a_1,a_2,c_2)$ and each edge is covered. 
                \begin{center}
\begin{tikzpicture}[
    scale=0.9,
    dot/.style={
        circle, 
        minimum size=6.5mm, 
        inner sep=0pt, 
        font=\tiny, 
        text=white,
        thick
    }
]

    \definecolor{green!80!black}{RGB}{0, 100, 0}

    \coordinate (a1) at (1, 1);
    \coordinate (a2) at (1, -1);
    \coordinate (a3) at (-1, -1);
    \coordinate (a4) at (-1, 1);
    
    \coordinate (c) at (0, 2);
    \coordinate (d) at (1.7, 1.3);

    \draw[thick] (a1) -- (a2) -- (a3) -- (a4) -- cycle;
    
    \draw[thick] (a4) -- (c) -- (d) -- (a2);

    \node[dot, fill=red] at (a1) {$a_1$};
    \node[dot, fill=blue] at (a2) {$a_2$};
    \node[dot, fill=red] at (a3) {$a_3$};
    \node[dot, fill=green!80!black] at (a4) {$a_4$};
    
    \node[dot, fill=red] at (c) {$c$};
    \node[dot, fill=green!80!black] at (d) {$c_2$};

\end{tikzpicture}
                \end{center}
                \item If $c$ is blue, then observe that this graph is isomorphic to $\Gamma(2, 1, 0, 0, 1, 1)$, so by Lemma \ref{counterexample}, we know that each edge is contained in a 5-cycle. 
                \begin{center}
\begin{tikzpicture}[
    scale=1.5,
    dot/.style={
        circle, 
        minimum size=6.5mm, 
        inner sep=0pt, 
        font=\tiny, 
        text=white,
        thick
    }
]

    \definecolor{green!80!black}{RGB}{0, 100, 0}

    \coordinate (a1) at (0, -1);
    \coordinate (a2) at (-0.3, -0.3);
    \coordinate (a3) at (1, -1);
    \coordinate (a4) at (0.5, 1);
    \coordinate (c) at (-0.3, 0.3);

    \draw[thick] (a1) -- (a2) -- (a3) -- (a4) -- cycle;
    
    \draw[thick] (a4) -- (c);

    \node[dot, fill=red] at (a1) {$a_1$};
    \node[dot, fill=blue] at (a2) {$a_2$};
    \node[dot, fill=red] at (a3) {$a_3$};
    \node[dot, fill=green!80!black] at (a4) {$a_4$};
    \node[dot, fill=blue] at (c) {$c$};

\end{tikzpicture}
                \end{center} 
            \end{enumerate}
        \end{enumerate}
        
        \item If $c$ is connected to none of the vertices, then there are 2 cases. If $c$ is red, or if $c$ is blue/green. 
        \begin{enumerate}[leftmargin=0.5cm]
            \item If $c$ is red, then there exists vertices $d_2, d_4$ connected to $a_2, a_4$ and to $c$ to block the edges($(c, a_2),(c, a_4)$). 
            \begin{center}
\begin{tikzpicture}[
    scale=0.9,
    dot/.style={
        circle, 
        minimum size=6.5mm, 
        inner sep=0pt, 
        font=\tiny, 
        text=white,
        thick
    }
]

    \definecolor{green!80!black}{RGB}{0, 100, 0}

    \coordinate (a1) at (1, 1);
    \coordinate (a2) at (1, -1);
    \coordinate (a3) at (-1, -1);
    \coordinate (a4) at (-1, 1);
    
    \coordinate (c) at (0.5, 2);
    \coordinate (d1) at (-0.5, 2);
    \coordinate (d2) at (1.8, 1.5);

    \draw[thick] (a1) -- (a2) -- (a3) -- (a4) -- cycle;
    
    \draw[thick] (a4) -- (d1) -- (c) -- (d2) -- (a2);

    \node[dot, fill=red] at (a1) {$a_1$};
    \node[dot, fill=blue] at (a2) {$a_2$};
    \node[dot, fill=red] at (a3) {$a_3$};
    \node[dot, fill=green!80!black] at (a4) {$a_4$};
    
    \node[dot, fill=red] at (c) {$c$};
    \node[dot, fill=blue] at (d1) {$d_4$};
    \node[dot, fill=green!80!black] at (d2) {$d_2$};

\end{tikzpicture}
            \end{center}
            However, this graph is isomorphic to $\Gamma(2, 1, 1, 1, 1, 1),$ and hence each edge is contained in a 5-cycle by Lemma \ref{counterexample}.
            \begin{center}
\begin{tikzpicture}[
    scale=1.2,
    dot/.style={
        circle, 
        minimum size=6.5mm, 
        inner sep=0pt, 
        font=\tiny, 
        text=white,
        thick
    }
]

    \definecolor{green!80!black}{RGB}{0, 100, 0}

    \coordinate (a1) at (0.5, -1);
    \coordinate (a2) at (-0.3, -0.3);
    \coordinate (a3_coord) at (-1.1, 1); 
    \coordinate (a4) at (0.5, 1);
    \coordinate (c_coord) at (-0.3, 0.3); 
    \coordinate (d_coord) at (-1.1, -1); 
    \coordinate (e_coord) at (1.5, -1);  

    \draw[thick] (d_coord) -- (a2) -- (a1) -- (a4) -- (c_coord) -- (a3_coord) -- cycle;
    
    \draw[thick] (a2) -- (e_coord) -- (a4);

    \node[dot, fill=red] at (a1) {$a_1$};
    \node[dot, fill=blue] at (a2) {$a_2$};
    \node[dot, fill=red] at (a3_coord) {$c$};
    \node[dot, fill=green!80!black] at (a4) {$a_4$};
    \node[dot, fill=blue] at (c_coord) {$d_4$};
    \node[dot, fill=green!80!black] at (d_coord) {$d_2$};
    \node[dot, fill=red] at (e_coord) {$a_3$};

\end{tikzpicture}
            \end{center} 
            \item If $c$ is blue/green, without loss of generality, let it be blue. Hence, for $i = 1, 4$ there exists $d_i$ that is connected to $c$ and $a_i$ in order to block the edge $(c, a_i).$ Observe $(c,d_1,a_1,a_2,a_3,a_4,d_4)$ and $(c,d_1,a_1,a_4,d_4)$ cover the edges. 
            \begin{center}
\begin{tikzpicture}[
    scale=0.9,
    dot/.style={
        circle, 
        minimum size=6.5mm, 
        inner sep=0pt, 
        font=\tiny, 
        text=white,
        thick
    }
]

    \definecolor{green!80!black}{RGB}{0, 100, 0}

    \coordinate (a1) at (1, 1);
    \coordinate (a2) at (1, -1);
    \coordinate (a3) at (-1, -1);
    \coordinate (a4) at (-1, 1);
    
    \coordinate (c) at (0, 2);
    \coordinate (d1) at (.8, 2);
    \coordinate (d4) at (-.8, 2);

    \draw[thick] (a1) -- (a2) -- (a3) -- (a4) -- cycle;
    
    \draw[thick] (a4) -- (d4) -- (c) -- (d1) -- (a1);

    \node[dot, fill=red] at (a1) {$a_1$};
    \node[dot, fill=blue] at (a2) {$a_2$};
    \node[dot, fill=red] at (a3) {$a_3$};
    \node[dot, fill=green!80!black] at (a4) {$a_4$};
    
    \node[dot, fill=blue] at (c) {$c$};
    \node[dot, fill=green!80!black] at (d1) {$d_1$};
    \node[dot, fill=red] at (d4) {$d_4$};

\end{tikzpicture}
            \end{center}
        \end{enumerate}
    \end{enumerate}
\end{proof}
\begin{lemma} \label{2nd4}
    If the second coloring of the 4-cycle is a subgraph of a MPGC, then each edge is part of a 5-cycle. 
\end{lemma}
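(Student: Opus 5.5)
Our plan is to reduce the second coloring to Lemma \ref{counterexample} and Lemma \ref{1st4}. In the second coloring the 4-cycle $(a_1,a_2,a_3,a_4)$ has $a_1,a_3$ red and $a_2,a_4$ blue. No green vertex occurs, so this 4-cycle is exactly the complete bipartite graph $K_{2,2}$ between $V_a$ and $V_b$. As in Lemma \ref{1st4}, the complement of the 4-cycle is disconnected. By \cite[Corollary 4.4]{florez} there is a vertex $c$ outside the cycle that joins $V_a$ to $V_b$ in the original graph, that is, $c$ is non-adjacent in the MPGC to some vertex of $V_a$ and to some vertex of $V_b$. By the same argument as in Lemma \ref{1st4}, $c$ is adjacent (in the MPGC) to at most one of $a_1,\dots,a_4$. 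It cannot be adjacent to both vertices of $V_a$ or of $V_b$, and adjacency to one vertex from each would create a triangle. We then split into two cases: $c$ is adjacent to exactly one cycle vertex, or to none. By the red/blue symmetry of the coloring we may assume the neighbour, if any, is $a_1$ (red).

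\emph{Case 1: $c$ adjacent to $a_1$ only.} Then $c$ is blue or green. If $c$ is green, the configuration $\{a_1,a_2,a_3,a_4,c\}$ should match the pattern of a $\Gamma(m,n,k,l,x,y)$ graph. We would take $V_f^r=\{a_1,a_3\}$, $V_e^b=\{a_2,a_4\}$ and $V_f=\{c\}$, which requires $c$ to be adjacent to all of $V_f^r$, so this fails as stated. Instead, we add the blocker $d$ of the non-edge $(c,a_3)$; $d$ is adjacent to $c$ and $a_3$ and has the remaining color. Then we examine the resulting 5-cycles $(c,d,a_3,a_2,a_1)$ and $(c,d,a_3,a_4,a_1)$. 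These two cycles contain every edge of the 4-cycle, which finishes the case directly, exactly as in case (1)(a) of Lemma \ref{1st4}. If $c$ is blue, the same blocker argument applies: a vertex $d$ blocks $(c,a_3)$, and the same two 5-cycles cover all four edges. So Case 1 closes without further branching, since $c$ and $a_3$ always have different colors or, if not, a blocker is still forced.

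The one delicate point is that $c$ and $a_3$ could share a color, in which case no blocker is guaranteed. That cannot happen here, because $c$ is adjacent to $a_1$, which has the same color as $a_3$, so $c$ is not red.

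\emph{Case 2: $c$ adjacent to none of the cycle vertices.} If $c$ is green, blockers $d_1,d_2$ exist for $(c,a_1)$ and $(c,a_2)$ (different colors from $c$). We would then take the 7-cycle $(c,d_1,a_1,a_4,a_3,a_2,d_2)$, but a 5-cycle is needed. Here we plan instead to use the blockers $d_1$ of $(c,a_1)$ and $d_4$ of $(c,a_4)$. The cycle $(c,d_1,a_1,a_4,d_4)$ covers $(a_1,a_4)$. The 7-cycle $(c,d_1,a_1,a_2,a_3,a_4,d_4)$ should be colored so that each color appears at least twice, which would let Lemma \ref{7-cycle} cover $(a_1,a_2),(a_2,a_3),(a_3,a_4)$. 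Checking the color counts is the main obstacle: the colors of $d_1$ and $d_4$ are forced only up to a choice, and we must verify the hypothesis of Lemma \ref{7-cycle} in each subcase. Where it fails, we would recolor using the analogous cycles through $d_2,d_3$. If $c$ is red (or, symmetrically, blue), then $c$ has the same color as $a_1,a_3$, so only the non-edges $(c,a_2),(c,a_4)$ have blockers $d_2,d_4$, which are green. Since $c$ is red, it must be non-adjacent to $a_2,a_4$, so $d_2,d_4$ exist. If $d_2=d_4$ the graph contains $\Gamma$-type structure, namely $K_{2,2}$ plus a green vertex adjacent to $V_b$, giving $\Gamma(2,1,0,0,1,1)$-like configurations, and we would invoke Lemma \ref{counterexample}. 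If $d_2\ne d_4$, we use the 7-cycle $(c,d_2,a_2,a_1,a_4,d_4)$ combined with Lemma \ref{1st4} applied to the 4-cycle $(c,d_2,a_2,a_3,\dots)$, where the latter has a green vertex. This reduction to the first coloring is what we expect to finish all remaining subcases.
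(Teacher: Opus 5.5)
Your Case 1 is correct and is the paper's argument. Case 2, however, has genuine gaps.

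\textbf{Green $c$.} The blockers' colors are forced, not a choice. Since $c$ is green, $d_1,d_3$ must be blue and $d_2,d_4$ must be red. So in every 7-cycle of the form $(c,d_i,a_i,a_{i+1},a_{i+2},a_{i+3},d_{i+3})$, the vertex $c$ is the only green vertex. Hence the hypothesis of Lemma \ref{7-cycle} never holds, and switching to the cycles through $d_2,d_3$ does not help. Lemma \ref{7} is not available either, because it rests on Lemma \ref{4-cycle}, which is the statement you are proving. The fix is the pattern you already used for $(a_1,a_4)$. Every pair $(c,a_i)$ is blocked, and the blockers lie outside the 4-cycle since $c$ has no neighbour on it. So $(c,d_i,a_i,a_{i+1},d_{i+1})$ (indices mod 4) is a 5-cycle for each $i$, with $d_i\ne d_{i+1}$ by color. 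These four cycles cover all four edges.

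\textbf{Red $c$, case $d_2=d_4=d$.} This configuration is not a $\Gamma(m,n,k,l,x,y)$. There is no green vertex adjacent to both $a_1,a_3$, and none adjacent to a single red $a_i$ as $\Gamma(2,1,0,0,1,1)$ would need, while the definition requires $x,y\ge 1$. So Lemma \ref{counterexample} does not apply. What you actually have are the 4-cycles $(d,a_2,a_1,a_4)$ and $(d,a_2,a_3,a_4)$, colored green, blue, red, blue. These are of the first coloring, so Lemma \ref{1st4} covers all four edges.

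\textbf{Red $c$, case $d_2\ne d_4$.} Here the proposal gives no argument. The cycle $(c,d_2,a_2,a_1,a_4,d_4)$ is a 6-cycle, not a 7-cycle, and Lemma \ref{6-cycle} is again circular here. The "4-cycle" $(c,d_2,a_2,a_3,\dots)$ does not close, since $c\not\sim a_3$. The missing step, which is how the paper proceeds (there with $c$ blue and blockers $c_1,c_3$), is to test the pair $(d_4,a_2)$.
\begin{enumerate}
\item If $(d_4,a_2)$ is an edge, then $d_4$ is a common green neighbour of $a_2,a_4$, and you are in the previous situation.
\item If not, maximality gives a red blocker $w$. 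We have $w\notin\{a_1,a_3\}$, since otherwise $(w,a_4,d_4)$ is a triangle, and $w\ne c$ because $c\not\sim a_2$. Then $(a_2,w,d_4,a_4,a_1)$ and $(a_2,w,d_4,a_4,a_3)$ are 5-cycles covering all four edges.
\end{enumerate}
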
 
\begin{proof}
    Note that the complement is disconnected. Let $c$ be a vertex such that it forms a path between $V_a$ and $V_b.$ $c$ must exist because of \cite{florez}. 

    We claim that $c$ is connected to at most 1 of the 4 from the original graph. $c$ can not be connected to both vertices in either $V_a$ or $V_b,$ or else it contradicts the fact that $c$ is connected to them in the original graph. $c$ also can not be connected to one from each because it will form a triangle. Hence, from here, we will assume that for every vertex $c$ is connected to, the vertex is not a part of the 4-cycle, unless stated otherwise.
    
    There are 2 cases, if $c$ is connected to 1 of the verticesz or if $c$ is connected to none. 
    \begin{enumerate}[leftmargin=0.5cm]
        \item Consider when there is one connection first. Without loss of generality, assume $c$ is connected to $a_1$. It follows that since $c$ can not be red, there must be another vertex $d$ blocking the edge $a_1, c.$ However, notice the 5-cycles.
        \begin{center}
\begin{tikzpicture}[
    scale=0.9,
    dot/.style={
        circle, 
        minimum size=6.5mm, 
        inner sep=0pt, 
        font=\tiny, 
        text=white,
        thick
    }
]

    \coordinate (a1) at (1, 1);
    \coordinate (a2) at (1, -1);
    \coordinate (a3) at (-1, -1);
    \coordinate (a4) at (-1, 1);
    
    \coordinate (c) at (0, 2);
    \coordinate (c1) at (0, 0); 

    \draw[thick] (a1) -- (a2) -- (a3) -- (a4) -- cycle;
    
    \draw[thick] (a3) -- (c) -- (c1) -- (a1);

    \node[dot, fill=red] at (a1) {$a_1$};
    \node[dot, fill=blue] at (a2) {$a_2$};
    \node[dot, fill=red] at (a3) {$a_3$};
    \node[dot, fill=blue] at (a4) {$a_4$};
    
    \node[dot, fill=black] at (c) {$c$};
    \node[dot, fill=black] at (c1) {$d$};

\end{tikzpicture}
        \end{center}
        \item Now, consider when there is no connection. We will do casework based on the color of $c.$ 
        \begin{enumerate}[leftmargin=0.5cm]
            \item Let $c$ be green. It follows that there exists a vertex $c_i$ that is connected to $a_i$ and $c$ so that it will block the edge $(a_i, c).$ Take $(a_1,d,c,a_3,a_2)$ and $(a_1,d,c,a_3,a_4)$ so each edge is covered. 
            \begin{center}
\begin{tikzpicture}[
    scale=1.2,
    dot/.style={
        circle, 
        minimum size=6mm, 
        inner sep=0pt, 
        font=\tiny, 
        text=white,
        thick
    }
]

    \definecolor{green!80!black}{RGB}{0, 100, 0}

    \coordinate (a1) at (1, 1);
    \coordinate (a2) at (1, -1);
    \coordinate (a3) at (-1, -1);
    \coordinate (a4) at (-1, 1);
    
    \coordinate (c) at (0, 2);
    
    \coordinate (c1) at (0.8, 1.8);
    \coordinate (c2) at (1.8, 0.8);
    \coordinate (c3) at (-1.8, 0.8);
    \coordinate (c4) at (-0.8, 1.8);

    \draw[thick] (a1) -- (a2) -- (a3) -- (a4) -- cycle;
    
    \draw[thick] (c) -- (c1) -- (a1);
    \draw[thick] (c) -- (c2) -- (a2);
    \draw[thick] (c) -- (c3) -- (a3);
    \draw[thick] (c) -- (c4) -- (a4);

    \node[dot, fill=red] at (a1) {$a_1$};
    \node[dot, fill=blue] at (a2) {$a_2$};
    \node[dot, fill=red] at (a3) {$a_3$};
    \node[dot, fill=blue] at (a4) {$a_4$};
    
    \node[dot, fill=green!80!black] at (c) {$c$};
    
    \node[dot, fill=blue] at (c1) {$c_1$};
    \node[dot, fill=red] at (c2) {$c_2$};
    \node[dot, fill=blue] at (c3) {$c_3$};
    \node[dot, fill=red] at (c4) {$c_4$};

\end{tikzpicture}
            \end{center}
            \item Let $c$ be blue/red. Without loss of generality, let it be blue. It follows that there exists the vertices $c_1, c_3$ that blocks the edge $(c, a_1)$ and $(c, a_3).$ If $c_1=c_3,$ then $(a_3,a_2,a_1,c_1)$ and $(a_3,a_4,a_1,c_1)$ covers the edges. Hence, suppose they are distinct. 
            
            Look at the edge $(c_3, a_1).$ If it is blocked by a vertex, then there are 5-cycles(the vertex must be new since there would be triangles otherwise). 
            \begin{center}
\begin{tikzpicture}[
    scale=1.2,
    dot/.style={
        circle, 
        minimum size=6.5mm, 
        inner sep=0pt, 
        font=\tiny, 
        text=white,
        thick
    }
]

    \definecolor{green!80!black}{RGB}{0, 100, 0}

    \coordinate (a1) at (1, 1);
    \coordinate (a2) at (1, -1);
    \coordinate (a3) at (-1, -1);
    \coordinate (a4) at (-1, 1);
    
    \coordinate (c) at (0, 2);
    \coordinate (c1) at (0.8, 1.8);
    \coordinate (c3) at (-2.5, 1.5);
    \coordinate (d) at (0, 0);

    \draw[thick] (a1) -- (a2) -- (a3) -- (a4) -- cycle;
    
    \draw[thick] (c) -- (c1) -- (a1) -- (d) -- (c3);
    
    \draw[thick] (c) -- (c3) -- (a3);

    \node[dot, fill=red] at (a1) {$a_1$};
    \node[dot, fill=blue] at (a2) {$a_2$};
    \node[dot, fill=red] at (a3) {$a_3$};
    \node[dot, fill=blue] at (a4) {$a_4$};
    
    \node[dot, fill=blue] at (c) {$c$};
    \node[dot, fill=green!80!black] at (c1) {$c_1$};
    \node[dot, fill=green!80!black] at (c3) {$c_3$};
    \node[dot, fill=blue] at (d) {}; 

\end{tikzpicture}
            \end{center}
            Hence, the edge must exist.  
            \begin{center}
\begin{tikzpicture}[
    scale=1.2,
    dot/.style={
        circle, 
        minimum size=6.5mm, 
        inner sep=0pt, 
        font=\tiny, 
        text=white,
        thick
    }
]

    \definecolor{green!80!black}{RGB}{0, 100, 0}

    \coordinate (a1) at (1, 1);
    \coordinate (a2) at (1, -1);
    \coordinate (a3) at (-1, -1);
    \coordinate (a4) at (-1, 1);
    
    \coordinate (c) at (0, 2);
    \coordinate (c1) at (0.8, 1.8);
    \coordinate (c3) at (-2.5, 1.5);

    \draw[thick] (a1) -- (a2) -- (a3) -- (a4) -- cycle;
    
    \draw[thick] (c) -- (c1) -- (a1) -- (c3);
    
    \draw[thick] (c) -- (c3) -- (a3);

    \node[dot, fill=red] at (a1) {$a_1$};
    \node[dot, fill=blue] at (a2) {$a_2$};
    \node[dot, fill=red] at (a3) {$a_3$};
    \node[dot, fill=blue] at (a4) {$a_4$};
    
    \node[dot, fill=blue] at (c) {$c$};
    \node[dot, fill=green!80!black] at (c1) {$c_1$};
    \node[dot, fill=green!80!black] at (c3) {$c_3$};

\end{tikzpicture}
            \end{center}
            However, observe the 4 cycles $(a_1, a_2, a_3, c_3), (a_1, a_4, a_3, c_3)$ of the first coloring type. It follows that by Lemma \ref{1st4}, we know that each edge of this graph must be part of a 5-cycle. 
        \end{enumerate}
    \end{enumerate}

\end{proof}
\begin{lemma} \label{4-cycle}
    In a MPGC, each edge of a 4-cycle is contained in a 5-cycle.
\end{lemma}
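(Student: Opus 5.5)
The plan is to reduce Lemma \ref{4-cycle} directly to Lemmas \ref{1st4} and \ref{2nd4} by classifying the possible colorings of a 4-cycle. Let $(a_1,a_2,a_3,a_4)$ be a 4-cycle in an MPGC $\overline\Gamma$, and fix a valid 3-coloring of $\overline\Gamma$. Since the MPGC is triangle-free, the chords $(a_1,a_3)$ and $(a_2,a_4)$ cannot be edges, but this does not matter for the coloring argument.

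The first step is to enumerate the proper 3-colorings of $C_4$ up to permuting colors and rotating or reflecting the cycle. Consecutive vertices receive different colors, so there are three possibilities:
\begin{itemize}
\item both opposite pairs are monochromatic, which uses two colors and gives the second coloring;
\item exactly one opposite pair is monochromatic and the other pair uses the two remaining colors, which gives the first coloring;
\item neither opposite pair is monochromatic, which is impossible.
\end{itemize}
The third case fails because, if $a_1 \neq a_3$ in color, then $a_2$ must be the third color, different from both; likewise $a_4$ is forced to the same third color, so $a_2$ and $a_4$ share a color.

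After a relabeling of colors and of the cycle, the 4-cycle is therefore exactly the configuration pictured before Lemma \ref{1st4}, with $V_a=\{a_1,a_3\}$ the monochromatic pair. The second step is to invoke Lemma \ref{1st4} in the first case and Lemma \ref{2nd4} in the second. Either lemma says every edge of the 4-cycle lies in a 5-cycle of the MPGC, which is the claim.

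The main point to check is that Lemmas \ref{1st4} and \ref{2nd4} apply to an arbitrary 4-cycle subgraph under an arbitrary fixed coloring of the whole MPGC, and not only to induced ones. Their proofs use only the 4-cycle edges, triangle-freeness, the maximality (blocking) condition relative to the chosen coloring, and the existence of a connecting vertex from \cite{florez}, so nothing more should be needed. I expect this to be essentially immediate, with the only real work being the short color-classification argument above.
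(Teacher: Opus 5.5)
Your proposal is correct and follows the paper's proof exactly: the paper asserts, just before Lemma \ref{1st4}, that a 4-cycle has only two 3-colorings up to isomorphism, and then cites Lemmas \ref{1st4} and \ref{2nd4}; your case analysis on the opposite pairs $\{a_1,a_3\}$ and $\{a_2,a_4\}$ supplies the justification for that classification, which the paper leaves implicit. One minor correction to your side remark: the proof of Lemma \ref{1st4} also relies on Lemma \ref{counterexample}, not only on the blocking condition, but this does not affect the reduction since you use both lemmas as black boxes.
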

\begin{proof}
    Follows immediately from Lemma \ref{1st4} and \ref{2nd4}.
\end{proof}
\begin{lemma} \label{6-cycle}
    In a MPGC, each edge of a 6-cycle is contained in a 5-cycle.
\end{lemma}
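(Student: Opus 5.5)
Let $(a_1,a_2,a_3,a_4,a_5,a_6)$ be a 6-cycle in the MPGC and fix a valid 3-coloring. The plan is to reduce everything to the earlier lemmas, chiefly Lemma \ref{counterexample} and Lemma \ref{4-cycle}. First, if any chord of the 6-cycle is present, we are done quickly. A long chord $(a_i,a_{i+3})$ splits the hexagon into two 4-cycles sharing that chord, and every edge of the 6-cycle lies on one of them, so Lemma \ref{4-cycle} applies. A short chord $(a_i,a_{i+2})$ is impossible, since it creates the triangle $(a_i,a_{i+1},a_{i+2})$. So we may assume the 6-cycle is induced.

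Next we split on the coloring of the induced hexagon. Its three antipodal pairs $\{a_1,a_4\},\{a_2,a_5\},\{a_3,a_6\}$ are non-edges, and the short diagonals are also non-edges.

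\textbf{Case A: the coloring uses only two colors.} Then the hexagon alternates, say red/blue. It is not an MPGC alone, since it is bipartite and \cite{gruber} applies. I would try to realize it as a degenerate $\Gamma$-graph: $\Gamma(1,1,1,1,1,1)$ is exactly a 6-cycle with colors $a,e,c,d,f,b$ in the order blue, green, red, blue, green, red. That pattern uses three colors, so Case A needs care. The fix is to find a vertex blocking a long diagonal $(a_1,a_4)$. This pair has different colors in the alternating case, so by maximality some vertex $h$ is adjacent to both. Then $(a_1,a_2,a_3,a_4,h)$ and $(a_1,h,a_4,a_5,a_6)$ are 5-cycles, and together they cover all six edges.

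\textbf{Case B: three colors.} Whenever an antipodal pair $\{a_i,a_{i+3}\}$ is differently colored, the same blocking trick gives a vertex $h$ adjacent to both. We get the two 5-cycles $(a_i,a_{i+1},a_{i+2},a_{i+3},h)$ and $(a_{i+3},a_{i+4},a_{i+5},a_i,h)$, covering everything. Note that $h$ cannot lie on the hexagon: a hexagon vertex adjacent to both $a_i$ and $a_{i+3}$ would give a chord. So the only remaining case is when all three antipodal pairs are monochromatic. Then the coloring is, up to renaming, blue, green, red, blue, green, red, matching the pattern of $\Gamma(1,1,1,1,1,1)$ with $a_{n_1},e_{n_2},c_{n_3},d_{n_4},f_{n_5},b_{n_6}$. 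Because the hexagon is induced, the required adjacencies and non-adjacencies hold. Lemma \ref{counterexample} then finishes the proof.

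The main obstacle is checking that the identification with $\Gamma(1,1,1,1,1,1)$ matches the definition exactly, including the colors assigned to each set, the induced condition, and the case $k,l>0$ handled in Lemma \ref{counterexample}. If that identification has a color mismatch, I would instead use the blocking argument on a pair of vertices at distance two with different colors. This yields a 4-cycle through two hexagon edges, to which Lemma \ref{4-cycle} applies, and then I would iterate around the hexagon.
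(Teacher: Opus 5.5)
Your proof is correct and is essentially the paper's argument. A long diagonal splits the hexagon into two 4-cycles, which Lemma \ref{4-cycle} handles. Otherwise, any differently colored antipodal pair must be blocked by a vertex $h$, which you rightly note lies off the hexagon, and this gives two covering 5-cycles. If all three antipodal pairs are monochromatic, the hexagon is exactly $\Gamma(1,1,1,1,1,1)$ with $k=l=1>0$, so Lemma \ref{counterexample} finishes.

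The hedge in your last paragraph is unnecessary, since the colors blue, green, red, blue, green, red on $a,e,c,d,f,b$ match the definition exactly. The fallback would not have helped anyway: a pair at distance two on the hexagon is always already "blocked" by the middle vertex, so it yields no new vertex.
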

\begin{proof}
    Note that by Lemma \ref{4-cycle}, we only need to prove that each edge is contained in either a 4 or 5-cycle, instead of just 5-cycle. We will do casework on the number of long diagonals in the 5-cycle. Let our MPGC graph be $\Gamma.$ 
    \begin{enumerate}[leftmargin=0.5cm]
        \item If there are no long diagonals, then adding the diagonal $(a_1, a_4)$ must result in either a contradiction in triangle-freeness or 3-colorability. 
        \begin{enumerate}[leftmargin=0.5cm]
            \item If it fails the triangle-free constraint, there exists $b \in V_\Gamma$ such that $(b, a_4), (b, a_1) \in E_\Gamma.$ 
            \begin{center}
\begin{tikzpicture}[
    scale=1.5,
    dot/.style={
        circle, 
        minimum size=6.5mm, 
        inner sep=0pt, 
        font=\tiny, 
        text=white,
        thick,
        fill=black
    }
]

    \definecolor{green!80!black}{RGB}{0, 100, 0}

    \coordinate (a1) at (0:1);
    \coordinate (a2) at (60:1);
    \coordinate (a3) at (120:1);
    \coordinate (a4) at (180:1);
    \coordinate (a5) at (240:1);
    \coordinate (a6) at (300:1);
    
    \coordinate (b) at (75:1.6);

    \draw[thick, black] (a1) -- (a2) -- (a3) -- (a4) -- (b) -- (a1);
    
    \draw[thick, black] (a4) -- (a5) -- (a6) -- (a1);
    
    \draw[thick, black, dashed] (a1) -- (b) -- (a4);

    \node[dot] at (a1) {$a_1$};
    \node[dot] at (a2) {$a_2$};
    \node[dot] at (a3) {$a_3$};
    \node[dot] at (a4) {$a_4$};
    \node[dot] at (a5) {$a_5$};
    \node[dot] at (a6) {$a_6$};
    \node[dot] at (b) {$b$};

\end{tikzpicture}
            \end{center}
            However, observe the cycles, $(b, a_1, a_6, a_5, a_4), (b, a_1, a_2, a_3, a_4).$ 
            \item Now we handle the case where it fails the 3-colorability contraint. It follows that $(a_1,a_4)$, $(a_2,a_5)$, $(a_3,a_6)$ must be pairs of same colors. But then, this is isomorphic to $\Gamma(1, 1, 1, 1, 1, 1),$ and we are done by Lemma \ref{counterexample}.
            \begin{center}
\begin{tikzpicture}[
    scale=1.5,
    dot/.style={
        circle, 
        minimum size=6.5mm, 
        inner sep=0pt, 
        font=\tiny, 
        text=white,
        thick
    }
]

    \definecolor{green!80!black}{RGB}{0, 100, 0}

    \coordinate (a1) at (0:1);
    \coordinate (a2) at (60:1);
    \coordinate (a3) at (120:1);
    \coordinate (a4) at (180:1);
    \coordinate (a5) at (240:1);
    \coordinate (a6) at (300:1);

    \draw[thick] (a1) -- (a2) -- (a3) -- (a4) -- (a5) -- (a6) -- cycle;

    \node[dot, fill=red] at (a1) {$a_1$};
    \node[dot, fill=blue] at (a2) {$a_2$};
    \node[dot, fill=green!80!black] at (a3) {$a_3$};
    \node[dot, fill=red] at (a4) {$a_4$};
    \node[dot, fill=blue] at (a5) {$a_5$};
    \node[dot, fill=green!80!black] at (a6) {$a_6$};

\end{tikzpicture}
            \end{center}
        \end{enumerate}
        \item If there is at least one diagonal. Without loss of generality, let one of them be the diagonal $(a_1, a_4).$ However, notice the 4-cycles $(a_1, a_2, a_3, a_4)$ and $(a_1, a_6 ,a_5, a_4)$.
    \end{enumerate}
\end{proof}
\begin{lemma}\label{7}
    In a MPGC, each edge of a 7-cycle is contained in a 5-cycle.
\end{lemma}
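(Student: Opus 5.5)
Fix a valid 3-coloring of the MPGC and a 7-cycle $(a_1,\dots,a_7)$. As in the proof of Lemma~\ref{6-cycle}, Lemma~\ref{4-cycle} lets us show only that each edge of the 7-cycle lies in a 4-cycle or a 5-cycle. We split according to the color multiplicities on the cycle. An odd cycle needs all three colors. If every color occurs at least twice, Lemma~\ref{7-cycle} gives the result directly. Since $7=2+2+3$ is the only split with all parts at least $2$, the remaining case is that some color, say green, occurs exactly once. The other six vertices then alternate red/blue along the path that remains after deleting the green vertex.

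Suppose green occurs only at $a_1$, so that $a_2,\dots,a_7$ alternate red and blue. Then $a_2$ and $a_7$ have different colors. First we look at chords. A chord $(a_i,a_j)$ splits the 7-cycle into two cycles of lengths $p$ and $q$ with $p+q=9$, so $\{p,q\}$ is $\{3,6\}$, $\{4,5\}$ or $\{2,7\}$. Triangles are excluded, so every chord has $|i-j|\in\{3,4\}$ cyclically and splits the cycle into a 4-cycle and a 5-cycle. Together these two cycles cover every edge of the 7-cycle, and Lemma~\ref{4-cycle} finishes.

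Now assume there are no chords, and consider a non-edge between two differently colored vertices at distance $3$, for example $(a_1,a_4)$ or $(a_2,a_5)$. By maximality, and since $a_1$ is the only green vertex on the cycle, adding such an edge cannot break 3-colorability through the cycle alone. The reason is that we can recolor: swap red and blue on a suitable component, or note that the pair already has different colors. This recoloring needs care, since global 3-colorability involves the whole graph. The cleaner route is the blocking definition: whenever the two endpoints have different colors in the fixed coloring, adding the edge keeps the coloring valid, so some vertex $h$ must block it.

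A blocker $h$ of $(a_i,a_{i+3})$ cannot lie on the cycle, because there are no chords. So $h$ gives a path $a_i,h,a_{i+3}$. Together with the short arc of length $3$, this forms a 5-cycle covering three cycle edges. Together with the long arc of length $4$, it forms a 6-cycle, and Lemma~\ref{6-cycle} applies. Either cycle suffices, since the 6-cycle already covers the four long-arc edges and Lemma~\ref{6-cycle} puts each of them in a 5-cycle. So it is enough to find a single differently colored distance-3 pair, which yields one 5-cycle covering the three short-arc edges and one 6-cycle covering the four long-arc edges. The pair $(a_2,a_5)$ works: both are non-green, and since the colors alternate along $a_2,\dots,a_7$, $a_2$ and $a_5$ are three steps apart and so have different colors. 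This covers all seven edges.

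The main obstacle is the step just used: checking that every case leads to a genuine blocker, that is, that the chosen pair really has different colors in the fixed coloring and that no triangle or chord degenerates the cycle. The chord case already handles the situation where the edge exists. The only other subtlety is that $h$ might coincide with a vertex of the cycle, which we excluded because it would create a chord. This argument seems to make Lemma~\ref{7-cycle} almost unnecessary. If so, we could apply the same distance-3 blocker argument to any differently colored pair in every coloring type, and we would fall back on Lemma~\ref{7-cycle} only when all distance-3 pairs happen to have the same color.
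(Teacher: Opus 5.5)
Your proof is correct and is essentially the paper's. Both invoke Lemma~\ref{7-cycle} when every color occurs at least twice. Otherwise both finish with Lemma~\ref{4-cycle} if there is a (necessarily distance-3) chord, and with a 5-cycle plus a 6-cycle through an off-cycle blocker (Lemma~\ref{6-cycle}) if there is none. The paper blocks the pair $(a,d)$ through the unique green vertex rather than your $(a_2,a_5)$. Your clearing of all chords first also avoids a small slip there: the paper says the on-cycle blocker $g$ creates a triangle, when it actually creates a distance-3 chord.

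Your closing remark is right and can be strengthened. Since $3$ generates $\mathbb{Z}_7$, no proper coloring makes every pair $a_i,a_{i+3}$ monochromatic, so the blocker argument works for every coloring and Lemma~\ref{7-cycle} is not needed at all.
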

\begin{proof}
    By Lemma \ref{7-cycle}, we only need to prove it when at least one color shows up at most once. The only non-isomorphic coloring is $(r,g,b,g,b,g,b).$ 
    \begin{center}
\begin{tikzpicture}[
    scale=2.5,
    dot/.style={
        circle, 
        minimum size=6mm, 
        inner sep=0pt, 
        font=\tiny, 
        text=white,
        thick
    }
]

    \definecolor{green!80!black}{RGB}{0, 100, 0}

    \foreach \i [count=\j from 0] in {a,b,c,d,e,f,g} {
        \coordinate (\i) at ({360/7 * \j}:1);
    }
    
    \coordinate (h) at ({360/7 * 5}:0.2);

    \draw[thick] (a) -- (b) -- (c) -- (d) -- (e) -- (f) -- (g) -- cycle;
    
    \draw[thick, cyan] (a) -- (d);
    
    \draw[thick, magenta] (a) -- (h) -- (d);

    \node[dot, fill=green!80!black] at (a) {$a$};
    \node[dot, fill=red]        at (b) {$b$};
    \node[dot, fill=blue]       at (c) {$c$};
    \node[dot, fill=red]        at (d) {$d$};
    \node[dot, fill=blue]       at (e) {$e$};
    \node[dot, fill=red]        at (f) {$f$};
    \node[dot, fill=blue]       at (g) {$g$};
    
    \node[dot, fill=blue, minimum size=4mm] at (h) {};

\end{tikzpicture}
    \end{center}
    Consider the edge $(a, d).$ If it exists, then by Lemma \ref{4-cycle}, we are done. If it does not exist, then there must exist a blue vertex blocking the edge. If the blue vertex was $e,$ we are done by same reasoning as earlier. If the vertex was $c$ or $g$, then there is a triangle. If it was a new vertex, we have Lemma \ref{6-cycle} which also finishes each edge. Thus, each edge of a 7-cycle is contained in a 5-cycle. 
\end{proof}
\begin{thm}\label{5-complement}
    In an MPGC, each edge is contained in a 5-cycle.
\end{thm}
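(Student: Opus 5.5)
The plan is to reduce Theorem \ref{5-complement} to the cycle lemmas already proved. Fix an MPG $\Gamma$ and an edge $(u,v)$ of $\overline\Gamma$. By Lemmas \ref{4-cycle}, \ref{6-cycle} and \ref{7}, it suffices to show that $(u,v)$ lies in some cycle of length $4$, $5$, $6$ or $7$ of $\overline\Gamma$. By Theorem \ref{deg2}, both $u$ and $v$ have a further neighbour. Choose $u'\neq v$ adjacent to $u$ and $v'\neq u$ adjacent to $v$; triangle-freeness gives $u'\neq v'$ and $u'\not\sim v$, $v'\not\sim u$. Fix a $3$-coloring of $\overline\Gamma$ throughout.

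The main tool is the maximality condition applied to the non-edge $(u',v)$, together with Theorem \ref{dia23}. There are two cases.

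\textbf{Case 1: triangle-freeness blocks $(u',v)$.} If $u'$ and $v$ have different colors and adding $(u',v)$ would create a triangle, then some vertex $w$ is adjacent to both $u'$ and $v$. If $w=u$ this is impossible, so $w\neq u$. Then $(u,u',w,v)$ is a $4$-cycle through $(u,v)$, and Lemma \ref{4-cycle} applies.

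\textbf{Case 2: colors force the non-edge.} The remaining possibilities are that $u'$ and $v$ have the same color, or that adding $(u',v)$ is blocked only by $3$-colorability. These are the delicate situations. I would run the same argument on every pair $(x,y)$ with $x\in N(u)\setminus\{v\}$ and $y\in N(v)\setminus\{u\}$, and also on pairs at distance two from the edge. If any such non-edge between differently colored vertices is blocked by a common neighbour $w$, then:
\begin{itemize}
\item $w\notin\{u,v\}$ gives a cycle $(u,x,w,y,v)$ of length $5$;
\item a common neighbour of $x$ and $v$ gives a cycle of length $4$;
\item a blocker at one further step gives cycles of length $6$ or $7$, and Lemmas \ref{6-cycle} and \ref{7} apply.
\end{itemize}
If all of these are excluded, then every pair $x\in N(u)\setminus\{v\}$, $y\in N(v)\setminus\{u\}$ with distinct colors is in fact adjacent. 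That would make $(u,x,y,v)$ a $4$-cycle, again covered by Lemma \ref{4-cycle}.

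Hence the bad configuration is the one where all of $N(u)\setminus\{v\}$ and $N(v)\setminus\{u\}$ share a single color, namely the third color $\gamma$ distinct from the colors of $u$ and $v$. This is exactly the structure for which the family $\Gamma(m,n,k,l,x,y)$ was designed. Take $u$ in the role of a $b$-vertex and $v$ in the role of an $a$-vertex, with the $\gamma$-colored neighbours playing the roles of $e$'s and $f$'s. Then grow the configuration using the maximality condition on the missing edges between them, in the manner of the proofs of Lemmas \ref{1st4} and \ref{2nd4}. The goal is to exhibit either a short cycle through $(u,v)$ or a $\Gamma(m,n,k,l,x,y)$ subgraph containing $(u,v)$. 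In the latter case Lemma \ref{counterexample}, with Lemma \ref{nonzero} for the one-sided cases, finishes the proof.

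I expect this last step to be the main obstacle: in the monochromatic-neighbourhood case, producing an actual cycle of length at most $7$ through $(u,v)$, or a $\Gamma$-subgraph, when blockers may be new vertices. To handle it I would use Theorem \ref{dia23} to bound distances between $u'$ and $v'$ by $3$. A shortest $u'$–$v'$ path avoiding $u$ and $v$ then closes a cycle through $(u,v)$ of length at most $3+3=6$. Such a path avoiding $u,v$ exists because all neighbours of $u'$ other than $u$ can be routed around, using the connectivity of $\overline\Gamma$ from Theorem \ref{connected}. Proving that this detour stays within length $7$ is the part that needs genuine casework.
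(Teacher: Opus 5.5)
There is a genuine gap, and it sits where you flag the ``main obstacle''. Your proposed fix does not work. Theorem \ref{dia23} only gives $d(u',v')\le 3$ in $\overline\Gamma$, and the path $u',u,v,v'$ already realizes this, so the diameter bound gives you no new path. Connectivity of $\overline\Gamma$ (Theorem \ref{connected}) does not make $\overline\Gamma-\{u,v\}$ connected. Even if a $u'$--$v'$ path avoiding $u,v$ exists, nothing bounds its length, so none of the $4$-, $6$- or $7$-cycle lemmas can be invoked. Your Case 1 is also vacuous. By construction $u$ is a common neighbour of $u'$ and $v$, so the non-edge $(u',v)$ is always blocked by $u$, and ``$w=u$ is impossible'' is false. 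The informative pairs are the $(x,y)$ with $x\in N(u)\setminus\{v\}$, $y\in N(v)\setminus\{u\}$ from your Case 2. That reduction is correct: differently coloured pairs give a $4$- or $5$-cycle, which leaves only the case where all these neighbours have the third colour. But that case is then never resolved.

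The idea the paper uses is to lengthen the path by one vertex before applying the diameter bound. Pick $z\sim v'$ with $z\neq v$, using Theorem \ref{deg2}. If $z=u'$ you have the $4$-cycle $(u',u,v,v')$. Otherwise $u',u,v,v',z$ is a path of length $4$, while $d(u',z)\le 3$, so a shortest $u'$--$z$ path cannot be this path. If that shortest path passes through $u$, $v$ or $v'$, you get either a triangle or a $4$- or $5$-cycle through $(u,v)$. Otherwise it closes a $5$-, $6$- or $7$-cycle through $(u,v)$. Lemmas \ref{4-cycle}, \ref{6-cycle} and \ref{7} then finish, with no colouring casework at all.

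Alternatively, your own framework closes with no ``growing''. In the monochromatic case, rename the colours so that $v$ is blue and $u$ is red. Then $v$ as $a_1$, $u$ as $b_1$, $V_e=N(v)\setminus\{u\}$ and $V_f=N(u)\setminus\{v\}$ already form a $\Gamma(1,1,0,0,x,y)$ subgraph. These sets are nonempty by Theorem \ref{deg2} and disjoint by triangle-freeness. Lemma \ref{counterexample} then applies directly to $(u,v)$. You named exactly these roles but did not notice that the configuration is already of that type.
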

\begin{proof}
    We will prove the edge $(u, v)$ is contained in a 5-cycle. By Theorem \ref{deg2}, there exists another vertex $x$ connected to $u.$ Similarly, there exists a vertex $y$ connected to $v.$ If $x = y,$ there is a triangle. Hence, assume that $x \neq y.$ By Theorem \ref{deg2}, there exists another vertex $z$ connected to $y.$ 
    
    If $x = z,$ we have the 4-cycle $(x, u, v, y),$ and by Lemma \ref{4-cycle}, we are done. 

    Assume $x \neq z.$ Now, because of Theorem \ref{dia23}, we know there is a path of length 1, 2, or 3 between $x$ and $z.$ However, in each case, there exists a 5, 6, or 7-cycle containing the edge $(u, v)$ and thus, by Lemma \ref{6-cycle}, \ref{7}, we are done(if any of the blocking vertices are pre-existing vertices, there are triangles). 
    \begin{center}
\begin{tikzpicture}[
    scale=2,
    dot/.style={
        circle, 
        minimum size=6mm, 
        inner sep=0pt, 
        font=\tiny, 
        text=white,
        thick,
        fill=black
    }
]

    \definecolor{green!80!black}{RGB}{0, 100, 0}

    \coordinate (u) at (-0.5, 0);
    \coordinate (v) at (0.5, 0);
    \coordinate (x) at (-0.75, -0.375);
    \coordinate (y) at (0.75, -0.375);
    \coordinate (z) at (0.5, -0.75);
    
    \coordinate (a1) at (-0.25, -1);
    \coordinate (a2) at (-0.5, -1.5);
    \coordinate (a3) at (0, -1.5);

    \draw[thick] (x) -- (u) -- (v) -- (y) -- (z);
    
    \draw[thick, red] (x) -- (z);
    
    \draw[thick, blue] (x) -- (a1) -- (z);
    
    \draw[thick, green!80!black] (x) -- (a2) -- (a3) -- (z);

    \node[dot] at (x) {$x$};
    \node[dot] at (u) {$u$};
    \node[dot] at (v) {$v$};
    \node[dot] at (y) {$y$};
    \node[dot] at (z) {$z$};
    
    \node[dot, minimum size=3.5mm] at (a1) {};
    \node[dot, minimum size=3.5mm] at (a2) {};
    \node[dot, minimum size=3.5mm] at (a3) {};

\end{tikzpicture}
    \end{center}
\end{proof} 
\begin{remark}
    However, each edge of MPGs is not necessarily contained in a 5-cycle. Consider the diagram below, which is a 5-cycle with one duplicated vertex. It can be seen that the (blue)edge connecting the vertices is not contained in a 5-cycle.
    \begin{center}
\begin{tikzpicture}[
    scale=1.5,
    dot/.style={
        circle, 
        minimum size=4mm, 
        inner sep=0pt, 
        thick,
        fill=black
    }
]

    \foreach \i in {0,1,2,3,4} {
        \coordinate (a\i) at ({72 * \i}:1);
    }
    \coordinate (O) at (0,0);

    \draw[thick] (a1) -- (a2) -- (a3) -- (a4) -- (a0) -- cycle;
    
    \draw[thick] (a1) -- (O) -- (a3);
    
    \draw[thick, blue] (a2) -- (O);

    \node[dot] at (a0) {};
    \node[dot] at (a1) {};
    \node[dot] at (a2) {};
    \node[dot] at (a3) {};
    \node[dot] at (a4) {};
    \node[dot] at (O) {};

\end{tikzpicture}
    \end{center}
\end{remark}

\end{document}